\newcommand{\rrvert}{\vert}
\newcommand{\llvert}{\vert}
\newcommand{\Cdu}{\frac{C}{2}}
\newcommand{\implies}{\Longrightarrow}
\newcommand{\hb}{\bar{h}}
\newcommand{\lam}{{\lambda}}
\newcommand{\Var}[0]{\operatorname{Var}}
\newcommand{\Cov}[0]{\operatorname{Cov}}
\newcommand{\E}[0]{{E}}
\newcommand{\Vol}[0]{\operatorname{Vol}}
\newcommand{\SM}[0]{\operatorname{SM}}
\newcommand{\LM}[0]{\operatorname{LM}}
\newtheorem{maintheorem}{Theorem}
\newtheorem{lemma}{Lemma}
\newtheorem{proposition}{Proposition}
\newtheorem{conjecture}{Conjecture}
\begin{document}
\begin{frontmatter}

\title{Exact thresholds for Ising--Gibbs samplers on general graphs}
\runtitle{Exact thresholds for Ising--Gibbs samplers}

\begin{aug}
\author[A]{\fnms{Elchanan} \snm{Mossel}\corref{}\thanksref{t2}\ead[label=e1]{mossel@stat.berkeley.edu}}
\and
\author[A]{\fnms{Allan} \snm{Sly}\thanksref{t3}\ead[label=e2]{sly@stat.berkeley.edu}}
\runauthor{E. Mossel and A. Sly}
\affiliation{University of California, Berkeley,
and Weizmann Institute, and University of California, Berkeley}
\address[A]{Department of Statistics\\
University of California, Berkeley\\
367 Evans Hall\\
Berkeley, California 94720\\
USA\\
\printead{e1}\\
\hphantom{E-mail: }\printead*{e2}} 
\end{aug}

\thankstext{t2}{Supported by an Alfred Sloan fellowship
in Mathematics, by NSF CAREER Grant DMS-05-48249 (CAREER),
by DOD ONR Grant N0014-07-1-05-06, by BSF Grant 2004105 and by ISF Grant
1300/08.}

\thankstext{t3}{Supported in part by an Alfred Sloan Fellowship in
Mathematics.}

\received{\smonth{1} \syear{2010}}
\revised{\smonth{7} \syear{2011}}

%
\begin{abstract}
We establish tight results for rapid mixing of Gibbs samplers for the
Ferromagnetic Ising model on general graphs. We show that if
\[
(d-1) \tanh\beta< 1,
\]
then there exists a constant $C$ such that the discrete time mixing
time of Gibbs samplers for the ferromagnetic Ising model on \textit{any}
graph of $n$ vertices and maximal degree $d$, where all interactions
are bounded by $\beta$, and arbitrary external fields are bounded by $C
n \log n$. Moreover, the spectral gap is uniformly bounded away from 0
for all such graphs, as well as for infinite graphs of maximal degree
$d$.

We further show that when $d \tanh\beta< 1$, with high probability
over the Erd\H{o}s--R\'enyi random graph $G(n,d/n)$, it holds that the
mixing time of Gibbs samplers is
\[
n^{1+\Theta({1}/{\log\log n})}.
\]
Both results are tight, as it is known that the mixing time for random
regular and Erd\H{o}s--R\'enyi random graphs is, with high probability,
exponential in $n$ when $(d-1) \tanh\beta> 1$, and $d \tanh\beta>
1$, respectively. To our knowledge our results give the first tight
sufficient conditions for rapid mixing of spin systems on general
graphs. Moreover, our results are the first rigorous results
establishing exact thresholds for dynamics on random graphs in terms of
spatial thresholds on trees.
\end{abstract}

%
\begin{keyword}[class=AMS]
\kwd[Primary ]{60K35}
\kwd[; secondary ]{82B20}
\kwd{82C20}.
\end{keyword}
\begin{keyword}
\kwd{Ising model}
\kwd{Glauber dynamics}
\kwd{phase transition}.
\end{keyword}

\end{frontmatter}

\section{Introduction}\label{intro}
Gibbs sampling is a standard model in statistical physics for the
temporal evolution of spin systems as well as
a popular technique for sampling high-dimensional distributions.
The study of the convergence rate of Gibbs samplers has thus attracted
much attention from both statistical physics and theoretical computer
science. Traditionally such systems where studied on lattices. However,
the applications in computer science, coupled with the interest in
diluted spin-glasses in theoretical physics, led to an extensive
exploration of properties of Gibbs sampling on general graphs of
bounded degrees.

Below we will recall various definitions for measuring the convergence
rate of the dynamics in spectral and total variation forms.
In particular, we will use the notion of \textit{rapid mixing} to indicate
convergence in polynomial time in the size of the underlying
graph.

A feature of most sufficient conditions for rapid convergence is that
they either apply to general graphs,
but are not (known to be) tight,
or the results are known to be tight, but apply only to special
families of graphs, like $2$-dimensional grids, or trees.
Examples of results of the first type include the Dobrushin and the
Dobrushin--Shlosman conditions \cite{DobrushinShlosman85} and
results by Vigoda and collaborators on colorings; see, for
example, \cite{Vigoda99,Vigoda00,HayesVigoda05}. Examples of tight
results for special graphs include the Ising model on $2$-dimensional
grids by Martinelli and Oliveri \cite
{MartinelliOliveri94a,MartinelliOliveri94b};
see also \cite{Martinelli99} and the Ising model on trees \cite
{KeMoPe01,BeKeMoPe05,MaSiWe03a,MaSiWe04}.

In this paper, we consider Gibbs sampling for the ferromagnetic Ising
model on general graphs and provide a criteria in terms of
the maximal coupling constant $\beta$ and the maximal degree $d$ which
guarantees rapid convergence for \textit{any} graph and \textit{any} external
fields. The criteria is $(d-1) \tanh\beta< 1$.
We further establish that if $d \tanh\beta< 1$, then rapid mixing
holds, with high probability, on the Erd\H{o}s--R\'enyi random graph
of average degree $d$, thus proving the main conjecture of \cite
{MosselSly08,MosselSly09}.
Both results are tight as random $d$-regular graphs and Erd\H{o}s--R\'
enyi random graph of average degree $d$ with no external fields,
have, with high probability, mixing times that are exponential in the
size of the graph when $(d-1) \tanh\beta> 1$ (resp., $d \tanh\beta>
1$) \cite{GerschenfeldMontanari07,DemboMontanari09}. To our
knowledge, our results are the first tight sufficient conditions for
rapid mixing of spin systems on general graphs.

Our results are intimately related to the spatial mixing properties of
the Gibbs measure, particularly on trees. A model has the
\textit{uniqueness property} (roughly speaking) if the marginal spin at a
vertex is not affected by conditioning the spins of sets of distant
vertices as the distance goes to infinity. On the infinite $d$-regular
tree, uniqueness of the ferromagnetic Ising model holds when
$(d-1)\tanh\beta\leq1$~\cite{Lyons89}, corresponding to the region
of rapid mixing. It is known from the work of Weitz \cite{Weitz06}
that in fact spatial mixing occurs when $(d-1)\tanh\beta\leq1$ on
any graph of maximum degree $d$.

It is widely believed that (some form of) spatial mixing implies fast
mixing of the Gibbs sampler. However, this is only known for amenable
graphs and for a strong form of spatial mixing called ``strong spatial
mixing'' \cite{DSVW04}. While lattices are amenable, there are many
ensembles of graphs which are nonamenable such as expander graphs. In
fact, since most graphs of bounded degree are expanders, the strong
spatial mixing technique does not apply to them.
Our results apply to completely general graphs and in particular
various families of random graphs whose neighborhoods have exponential growth.

Our results also immediately give lower bounds on the spectral gap of
the continuous time Glauber dynamics which are independent of the size
of the graph. This in turn allows us to establish a
lower bound on the spectral gap for the Glauber dynamics on infinite
graphs of maximal degree bounded by $d$, as well.

To understand our result related to the Erd\H{o}s--R\'enyi random
graph, we note that
the threshold for the Erd\H{o}s--R\'enyi random graphs also
corresponds to a spatial mixing threshold. For a randomly chosen
vertex, the local graph neighborhood is asymptotically distributed as a
Galton--Watson branching process with offspring distribution Poisson
with mean $d$. Results of Lyons \cite{Lyons89} imply that the
uniqueness threshold on the Galton--Watson tree is $d\tanh\beta<1$,
which is equal to the threshold for rapid mixing established here.

The correspondence between spatial and temporal mixing is believed to
hold for many other important models. We conjecture that when there is
uniqueness on the $d$-regular tree for the \textit{antiferromagnetic
Ising} model or the \textit{hardcore model}, then there is rapid mixing of
the Gibbs sampler on all graphs of maximum degree $d$ in these models.
It is known that for both these models that the mixing time on almost
all random $d$-regular bipartite graphs is exponential in $n$ the size
of the graph beyond the uniqueness threshold \cite
{MoWeWo09,GerschenfeldMontanari07,DemboMontanari09}, so our
conjecture is that uniqueness on the tree exactly corresponds to rapid
mixing of the Gibbs sampler. We summarize our main contributions as follows:
%
\begin{itemize}
\item
Our results are the first results providing tight criteria for rapid
mixing of Gibbs samplers on general graphs.
\item
Our results show that the threshold is given by a corresponding
threshold for a tree model, in particular, in the case of random graphs
and dilute mean field models. We note that in the theory of
spin-glasses, it is conjectured that for many spin systems on random
diluted (bounded average degree) graphs the ``dynamical threshold'' for
rapid mixing is given by a corresponding ``replica'' threshold, that
is, a spatial threshold for a corresponding spin system on trees; see,
for example, \cite{MezMon09,KMRSZ07,MoRiSe08}. To the best of our
knowledge our results are the first to rigorously establish such thresholds.
\end{itemize}

While the proof we present here is short and elegant, it is
fundamentally different than previous approaches in the area. In
particular:
\begin{itemize}
\item
It is known that imitating the block dynamics technique \cite
{MartinelliOliveri94a,MartinelliOliveri94b} cannot be extended to the
nonamenable setting since the bounds rely crucially on the small
boundary-to-volume ratio which can no be extended to expander graphs;
see a more detailed discussion in \cite{DSVW04}.
\item Weitz \cite{Weitz06} noted that the tree of self avoiding walks
construction establishes mixing results on amenable graphs, but not for
nonamenable graphs. In general, correlation inequalities/spatial
mixing have previously only been shown to to imply rapid mixing on
amenable graphs; an excellent reference is the thesis of Weitz
\cite{Weitz05}.
\item
The technique of censoring the dynamics is another recent development
in the analysis of Gibbs samplers \cite{Weitz06} and can, for
instance, be used to translate results on the block dynamics to those
on the single site dynamics. Its standard application does not,
however, yield new results for nonamenable graphs.
\item
While tight results have been established in the case of trees \cite
{KeMoPe01,BeKeMoPe05,MaSiWe03a,MaSiWe04} which are nonamenable,
the methods do not generalize to more general graphs, as they make
fundamental use of properties of the tree, in particular, the presence
of leaves at the base. Indeed, the fact that the median degree of a
tree is 1 illustrates the difference between trees and regular graphs.
\end{itemize}
The main novelty in our approach is a new application of the censoring
technique. In the standard use of censoring, a censored Markov chain is
constructed which is shown to mix rapidly, and then the censoring
inequality implies rapid mixing of the original dynamics. Our approach
is a subtle conceptual shift. Rather than construct a censoring scheme
which converges to the stationary distribution, we construct a sequence
of censored dynamics which do not converge to stationarity. They do,
however, allow us to establish a sequence of recursive bounds from
which we derive our estimates of the spectral gap and the mixing time.

Another serious technical challenge of the paper was determining the
correct mixing time for the Gibbs sampler on Erd\H{o}s--R\'enyi random
graphs. The necessary estimate is to bound the mixing time on the local
neighborhoods of the graph which are Galton--Watson branching processes
with Poisson offspring distribution. This is done via an involved
distributional recursive analysis of the cutwidth of these branching
process trees.

In the following subsections, we state our results, and then we recall
the definition of the Ising model, Gibbs sampling and Erd\H{o}s--R\'
enyi random graphs. This is followed by a statement of a general
theorem, from which both of our main results follow. We then sketch the
main steps of the proof, which are followed by detailed proofs. We then
show how our spectral gap bounds on finite graphs can be extended to
infinite graphs. Finally we conclude with open problems involving other systems.

\subsection{Our results}

In our main result we establish the following tight criteria for rapid
mixing of Gibbs sampling for general graphs in terms of the maximal degree.

\begin{maintheorem}\label{tmain}
For any integer $d \geq2$, and inverse temperature $\beta> 0$, such that
%
\begin{equation}
\label{emainCondition} (d-1)\tanh\beta< 1,
\end{equation}
there exist constants
$0 < \lambda^*(C,\beta), C(d,\beta) < \infty$, such that
on any graph of maximum degree $d$ on $n$ vertices, the discrete time
mixing time of the Gibbs sampler for the ferromagnetic Ising model with
all edge interactions bounded by $\beta$, and arbitrary external
fields, is bounded above by $C n \log n$.

Further the continuous time spectral gap of the dynamics is bounded
below by~$\lambda^*$. The spectral gap bound applies also for infinite graphs.
\end{maintheorem}

We note that a lower bound of $\Omega(n \log n)$ on the mixing time
follows from the general results of \cite{HayesSinclair05}.

The techniques we develop here also allow us to derive results for
graphs with unbounded degrees. Of particular interest is the following
tight result:
%
\begin{maintheorem}\label{tER}
Let $\beta> 0$ and $d > 0$ and consider the Erd\H{o}s--R\'enyi random
graph $G$ on $n$ vertices, where each edge is present independently
with probability $d/n$. Then for all $\beta$ such that $d \tanh\beta
< 1$, there exists $c(d,\beta)$ and $C(d,\beta)$,
such that with high probability over $G$,
the discrete time mixing time $\tau_{\mathrm{mix}}$ of the Gibbs sampler for
the ferromagnetic Ising model with all edge interactions bounded by
$\beta$ and arbitrary external field satisfies
\[
n^{ (1+{c}/{\log\log n} )} \leq\tau_{\mathrm{mix}} \leq n^{ (1+{C}/{\log\log n} )},
\]
while the continuous time spectral gap satisfies
\[
n^{-{c}/{\log\log n}} \geq\mbox{Gap} \geq n^{-{C}/{\log
\log n}}.
\]
\end{maintheorem}

Both results are tight as estimates obtained in \cite
{GerschenfeldMontanari07,DemboMontanari09},
following \cite{MoWeWo09}, and they prove a conjecture from \cite
{MosselSly08,MosselSly09}, implying that for the Ising model without
external fields, the mixing time of the Gibbs sampler is, with high
probability, $\exp(\Omega(n))$ on random $d$-regular graphs if
$(d-1)\tanh\beta> 1$ and Erd\H{o}s--R\'enyi random graphs of average
degree $d$ when $d \tanh\beta> 1$.

\subsection{Standard background}
In the following subsection we recall some standard background on the
Ising model, Gibbs sampling and
Erd\H{o}s--R\'enyi random graphs.

\subsubsection{The Ising model}

The Ising model is perhaps the oldest and simplest discrete spin system
defined on graphs.
This model defines a distribution on labelings of the vertices of
the graph by $+$ and $-$.\vadjust{\goodbreak}

\begin{definition}
The (homogeneous) Ising model on a graph $G$ with inverse temperature
$\beta$ is a distribution on configurations $\{\pm\}^V$ such that
%
\begin{equation}
\label{eqdefising} P(\sigma)=\frac1{Z(\beta)}\exp\biggl(\beta\sum
_{\{v,u\}\in
E}\sigma(v)\sigma(u)\biggr),
\end{equation}
where $Z(\beta)$ is a normalizing constant.

More generally, we will be interested in the more general Ising models
defined by
%
\begin{equation}
\label{eqdefisinggeneral} P(\sigma)=\frac1{Z(
\beta)}\exp\bigl(H(\sigma)\bigr),
\end{equation}
where the Hamiltonian $H(\sigma)$ is defined as
\[
H(\sigma)=\sum_{\{v,u\}\in
E} \beta_{u,v}
\sigma(v)\sigma(u) + \sum_v h_v
\sigma(v),
\]
and where $h_v$ are arbitrary and $\beta_{u,v} \geq0$ for all $u$ and $v$.
In the more general case, we will write $\beta= \max_{u,v} \beta_{u,v}$.
\end{definition}

\subsubsection{Gibbs sampling}

The Gibbs sampler (also Glauber dynamics or heat bath) is a Markov
chain on configurations where a
configuration $\sigma$ is updated by choosing a vertex $v$ uniformly
at random and assigning it a spin according to the Gibbs
distribution conditional on the spins on $G-\{v\}$.
%
\begin{definition}
Given a graph $G=(V,E)$ and an inverse temperature $\beta$, the
Gibbs sampler is the discrete time Markov chain on $\{\pm\}^V$ where given
the current configuration $\sigma$ the next configuration $\sigma'$
is obtained
by choosing a vertex $v$ in $V$ uniformly at random and:
\begin{itemize}
\item
Letting $\sigma'(w) = \sigma(w)$ for all $w \neq v$.
\item
$\sigma'(v)$ is
assigned the spin $+$ with probability
\[
\frac{\exp(h_v + \sum_{u\dvtx(v,u)\in
E}\beta_{u,v} \sigma(u))}{\exp(h_v + \sum_{u\dvtx(v,u)\in
E}\beta_{u,v} \sigma(u))+\exp(-h_v-\sum_{u\dvtx(v,u)\in E} \beta_{u,v}
\sigma(u))}.
\]
%
\end{itemize}
\end{definition}
%

We will be interested in the time it takes the dynamics to get close
to distributions (\ref{eqdefising})
and (\ref{eqdefisinggeneral}). The \textit{mixing time}
$\tau_{\mathrm{mix}}$ of the chain is defined as the number of steps needed
in order to guarantee that the chain, starting from an arbitrary
state, is within total variation distance $1/2e$ from the
stationary distribution. The mixing time has the property that for any
integer $k$ and initial configuration $x$,
%
\begin{equation}
\label{emixingSubmult}
\bigl\| P (X_{k \tau_{\mathrm{mix}}} =\cdot\mid X_0=x ) -
P (\cdot) \bigr\|_{\mathrm{TV}} \leq e^{-k}.
\end{equation}

It is well known that Gibbs sampling is a reversible Markov chain
with stationary distribution $P$. Let $1=\lambda_1 > \lambda_2 \geq
\cdots\geq\lambda_m \geq-1$ denote the eigenvalues of the
transition matrix of Gibbs sampling. The \textit{spectral gap} is
denoted by $\min\{1-\lambda_2,1-|\lambda_m|\}$ and the
\textit{relaxation time} $\tau$ is the inverse of the spectral gap. The
relaxation time can be given in terms of the Dirichlet form of the
Markov chain by the equation
%
\begin{equation}
\label{eqrelaxdefn} \tau=\sup \biggl\{
\frac{2\sum_\sigma P(\sigma)
(f(\sigma))^2}{\sum_{\sigma\neq\tau} Q(\sigma,\tau)
(f(\sigma)-f(\tau))^2}\dvtx \sum_\sigma P(\sigma) f(\sigma)
\neq0 \biggr\},
\end{equation}
where $f\dvtx\{\pm\}^V\rightarrow\mathbb{R}$ is any function on configurations,
$Q(\sigma,\tau)=P(\sigma)P(\sigma\rightarrow\tau)$ and $P(\sigma
\rightarrow\tau)$ is transition probability from $\sigma$ to
$\tau$. We use the result that for reversible Markov chains the
relaxation time satisfies
%
\begin{equation}
\label{eqtauandspectral} \tau\leq
\tau_{\mathrm{mix}}\leq\tau \biggl(1+\frac12 \log\Bigl(\min_\sigma P(
\sigma)^{-1}\Bigr) \biggr),
\end{equation}
where $\tau_{\mathrm{mix}}$ is the mixing time (see, e.g., \cite{AldousFillu})
and so, by bounding the relaxation time, we can bound the mixing time
up to a polynomial factor.

While our results are given for the discrete time Gibbs Sampler
described above, it will, at times, be convenient to consider the
continuous time version of the model. Here sites are updated at rate 1
by independent Poisson clocks. The two chains are closely related: the
relaxation time of the continuous time Markov chain is $n$ times the
relaxation time of the discrete chain; see, for example, \cite{AldousFillu}.




\subsubsection{Erd\H{o}s--R\'enyi random graphs and other models of graphs}

The Erd\H{o}s--R\'enyi random graph $G(n,p)$, is the graph with $n$ vertices
$V$ and random edges $E$ where each potential edge $(u,v) \in V \times V$
is chosen independently with probability~$p$. We take
$p=d/n$ where $d \geq1$ is fixed. In the case $d < 1$, it is well
known that
with high probability all components of $G(n,p)$ are of logarithmic
size which implies immediately that the dynamics mix in polynomial time
for all $\beta$. A random $d$-regular graph $\mathcal{G}(n,d)$ is a
graph uniformly chosen from all $d$-regular graphs on $n$ labeled vertices.

Asymptotically the local neighborhoods of $G(n,d/n)$ and $\mathcal
{G}(n,d)$ are trees.
In the later case it is a tree where every node has exactly $d-1$
offspring (except for the root which has $d$ off-springs). In the
former case it is essentially a Galton--Watson branching process with
offspring distribution which is essentially Poisson with mean $d-1$.
Recall that the tree associated with a Galton--Watson branching process
with offspring distribution $X$ is a random rooted tree defined as
follows: for every vertex in the tree its number of offspring vertices
is independent with distribution $X$.

\subsection{A general theorem}
Theorems~\ref{tmain} and~\ref{tER} are both proved as special cases
of the following theorem which may be of independent interest.
For a graph $G=(V,E)$ and vertex $v \in V$, we write\vadjust{\goodbreak} $B(v,R)$ for the
ball of radius $R$ around $v$, that is, the set of all vertices that
are of distance at most $R$ from $v$. We write $S(v,R) = B(v,R)
\setminus B(v,R-1)$ for the sphere of radius $R$ around $v$.

\begin{maintheorem}\label{tgeneral}
Let $G$ be a graph on $n\geq2$ vertices such that there exist
constants $R,T,\mathfrak{X}\geq1$ such that the following
three conditions holds
for all $v\in V$:
\begin{itemize}
\item\textup{Volume}: The volume of the ball $B(v,R)$ satisfies
$|B(v,R)|\leq\mathfrak{X}$.

\item\textup{Local mixing}: For any configuration $\eta$ on $S(v,R)$
the continuous time mixing time of the Gibbs sampler on $B(v,R-1)$ with
fixed boundary condition $\eta$ is bounded above by $T$.

\item\textup{Spatial mixing}: For each vertex $u\in S(v,R)$, define
%
\begin{equation}
\label{eqSM1} a_u=\sup_{\eta^+,\eta^-} P\bigl(
\sigma_v=+\mid\sigma_{S}=\eta^+\bigr)- P\bigl(
\sigma_v=+\mid\sigma_{S}=\eta^-\bigr),
\end{equation}
where the supremum is over configurations $\eta^+,\eta^-$ on $S(v,R)$
differing only at $u$ with $\eta_u^+= +,\eta_u^-= -$. Then
%
\begin{equation}
\label{eqSM} \sum_{u\in S(v,R)} a_u \leq
\frac14.
\end{equation}
\end{itemize}
Then starting from the all $+$ and all $-$ configurations in continuous
time the monotone coupling couples with probability at least $\frac78$
by time $T \lceil\log8\mathfrak{X} \rceil ( 3 + \log_2 n
)$.

It follows that the mixing time of the Gibbs sampler in
continuous time satisfies
\[
\tau_{\mathrm{mix}} \leq T \lceil\log8\mathfrak{X} \rceil ( 3 +
\log_2 n ),
\]
while the spectral gap satisfies
\[
\mbox{Gap} \geq \bigl( T \lceil\log8\mathfrak{X} \rceil \bigr)^{-1}
\log2.
\]
\end{maintheorem}
We will write $\Vol(R,\mathfrak{X})$ for the statement that $|B(v,R)|
\leq\mathfrak{X}$ for all $v \in V$,
write $\SM(R)$ for the statement that (\ref{eqSM}) holds for all $v
\in V$ and write $\LM(R,T)$ for the statement that
the continuous time mixing time of the Gibbs sampler on $B(v,R-1)$ is
bounded above by $T$ for any fixed boundary condition $\eta$.
Using this notation the theorem states that:
%
\begin{equation}
\label{eqgeneral} \Vol(R,\mathfrak{X}) \mbox{ and } \SM(R) \mbox{ and }
\LM(R,T) \implies\tau_{\mathrm{mix}} \leq T \lceil\log8\mathfrak{X} \rceil ( 3 +
\log_2 n ).\hspace*{-28pt}
\end{equation}

In the conclusion section of the paper we state a much more general
version of Theorem~\ref{tgeneral} which applies to general monotone
Gibbs distributions and allows us to replace the balls $B(v,R)$ with be
arbitrary sets containing $v$ [where $S(v,R)$ is replaced by the inner
vertex boundary of the set]. We note that the implication proven here
for monotone systems showing
\[
\mbox{Spatial mixing} \implies\mbox{Temporal mixing}\vadjust{\goodbreak}
\]
is stronger than that established in previous work \cite
{StrookZegarlinski92,MartinelliOliveri94a,Cesi01,DSVW04} where it
is shown that strong spatial mixing implies temporal mixing for graphs
with sub-exponential growth (strong spatial mixing says that the
quantity $a_u$ decays exponentially in the distance between $u$ and
$v$). In particular, Theorem~\ref{tgeneral} applies also to graphs
with exponential growth and for a very general choice of blocks. Both
Theorems~\ref{tmain} and~\ref{tER} deal with expanding graphs where
Theorem~\ref{tgeneral} is needed.

A different way to look at our result is as a strengthening of the
Dobrushin--Shlosman condition \cite{DobrushinShlosman85}. Stated in its
strongest form in \cite{Weitz05}, Theorem 2.5, it says that rapid
mixing occurs if the effect on the spin at a vertex $v$ of
disagreements on the boundary of blocks containing $v$ is
small---averaged over all blocks containing $v$---then the model has
uniqueness and the block dynamics mixes rapidly. Theorem~\ref{tveryGeneral} requires only that for each vertex there exists a
block such that the boundary effect is small. This is critical in
expanders and random graphs where the boundary of a block is
proportional to its volume.

Finally we note that applied to the $d$-dimensional lattice
Theorem~\ref{tgeneral} gives a new proof of exponential ergodicity
for the Glauber dynamics on the infinite lattice $\mathbb{Z}^d$
whenever $\beta<\beta_c$ as well as a mixing time of $O(\log n)$ on
the $d$-dimensional torus of side-length $n$. The spatial mixing
condition follows from a result Higuchi \cite{Higuchi93}. This was
previously shown in Theorem 3.1 of~\cite{MartinelliOliveri94a}.

\subsection{Proofs sketch}
We briefly discuss the main ideas in our proofs of Theorems
\ref{tgeneral},~\ref{tmain} and~\ref{tER}.

\subsubsection{\texorpdfstring{Theorem \protect\ref{tgeneral} and censoring}{Theorem 3 and censoring}}
The proof of Theorem~\ref{tgeneral} is based on considering the
monotone coupling of the continuous time dynamics starting with all $+$
and all $-$ states and showing that there exists a constant $s$ such
that at time $k s$, for all vertices $v$, the probability that the two
measures have not coupled at $v$ is at most $2^{-k}$.

In order to prove such a claim by induction, it is useful to censor the
dynamics from time $k s$ onward by not performing any updates outside a
ball of radius $R$ around $v$. Recent results of Peres and Winkler show
that doing so will result in a larger disagreement probability at $v$
than without any censoring.

For the censored dynamics we use the triangle inequality and compare
the marginal probability at $v$ for the two measures by comparing each
distribution to the stationary distribution at $v$ given the boundary
condition and then comparing the two stationary distributions at $v$
given the two boundary conditions.

By using $\LM(R,T)$ and running the censored dynamics for $T \lceil
\log8\mathfrak{X} \rceil$ time, we can ensure that the error of the
first type contributes at most $2/(8\mathfrak{X})$ in case where the
two boundary conditions are different and therefore at most
$2/(8\mathfrak{X})$ times the expected number of disagreements at the
boundary which is bounded by $2^{-k-2}$ by induction. By using $\SM
(R)$ and the induction hypothesis, we obtain that the expected
discrepancy between the distributions at $\sigma_v$, given the two
different boundary conditions, is at most $2^{-k-2}$. Combining the two
estimates yields the desired result. As this gives an exponential rate
of decay in the expected discrepancy it establishes a constant lower
bound on the spectral gap.

The proofs of Theorems~\ref{tmain} and~\ref{tER} follow from
(\ref{eqgeneral})
by establishing bounds on $\Vol,\SM$ and $\LM$.

\subsubsection{Bounding the volume}
The easiest step in both Theorems~\ref{tmain} and~\ref{tER} is to
establish $\Vol(R,\mathfrak{X})$.
For graphs of degree at most $d$, the volume grows as $O((d-1)^{R})$
and using arguments from \cite{MosselSly09} one can show that if $R =
(\log\log n)^n$, then for $G(n,d/n)$, one can take $\mathfrak{X}$ of
order $d^R \log n$.

\subsubsection{Spatial mixing bounds}
Establishing spatial mixing bounds relies on the fact that for trees
without external fields, this is a standard calculation. The presence
of external fields can be dealt with by using a lemma from \cite
{BeKeMoPe05}, which shows that the for Ising model on trees, the
difference in magnetization is maximized when there are no external fields.
A crucial tool which allows us to obtain results for nontree graphs is
the Weitz tree \cite{Weitz06}.
This tree allows us to write magnetization ratios for the Ising model
on general graphs using a related model on the tree. In \cite
{MosselSly09} it was shown that the Weitz tree can be used to
construct an efficient algorithm, different than Gibbs sampling, for
sampling Ising configurations under the conditions of Theorems \ref
{tmain} and~\ref{tER} [the running time of the algorithm is
$n^{1+C(\beta)}$ compared to $C(\beta) n \log n$ established here].

\subsubsection{Local mixing bounds}
In order to derive local mixing bounds, we generalize results
from \cite{BeKeMoPe05} on the mixing times in terms of
cut-width to deal with arbitrary external fields. Further, for the case
of Erd\H{o}s--R\'enyi random graphs and
$R= (\log\log n)^2$, we show that with high probability the cut width
is of order $\log n / \log\log n$.

\section{Proofs}
In this section we prove Theorems~\ref{tgeneral},~\ref{tmain}
and~\ref{tER} while the verification of the $\Vol,\SM$ and $\LM$
conditions is deferred to the following sections. We begin by recalling
the notion of monotone coupling and the result by Peres--Winkler on
censoring. We then proceed with the proof of the theorems.

\subsection{Monotone coupling}\label{smonotoneCoupling}
For two configurations $X,Y\in\{ -, + \}^V$, we let $X \geq Y$
denote that $X$ is greater than or equal to $Y$ pointwise. When all the
interactions $\beta_{ij}$ are positive, it is well known that the
Ising model\vadjust{\goodbreak} is a monotone system under this partial ordering; that is,
if $X \geq Y$ then
\[
P (\sigma_v = + \mid \sigma_{V \setminus\{ v \}} = X_{V \setminus
\{ v \} } )
\geq P (\sigma_v = + \mid \sigma_{V \setminus\{
v \}} =Y_{V \setminus\{ v \} } ).
\]

As it is a monotone system, there exists a coupling of Markov chains $\{
X^x_t\}_{x\in\{ -,+\}^V } $ such that marginally, each has the law of
the Gibbs sampler with starting configurations $X_0^x = x$, and
further, that if $x \geq y$, then for all $t$, $X_t^x \geq X_t^y$. This
is referred to as the monotone coupling and can be constructed as
follows: let $v_1,\ldots$ be a random sequence of vertices updated by
the Gibbs sampler and associated with them i.i.d. random variables
$U_1,\ldots\,$, distributed as $U[0,1]$, which determine how the site is
updated. At the $i$th update, the site $v_i$ is updated to $+$ if
\[
U_i \leq\frac{\exp(h_v + \sum_{u\dvtx(v,u)\in
E}\beta_{u,v} \sigma(u))}{\exp(h_v + \sum_{u\dvtx(v,u)\in
E}\beta_{u,v} \sigma(u))+\exp(-h_v-\sum_{u\dvtx(v,u)\in E} \beta_{u,v}
\sigma(u))}
\]
and to $-$ otherwise. It is well known that such transitions preserve
the partial ordering which guarantees that if $x \geq y$, then $X_t^x
\geq X^y_t$ by the monotonicity of the system. In particular, this
implies that it is enough to bound the time taken to couple from the
all $+$ and all $-$ starting configurations.

\subsection{Censoring}

In general it is believed that doing more updates should lead to a more
mixed state. For the ferromagnetic Ising model and other monotone
systems, this intuition was proved by Peres and Winkler. They showed
that starting from the all $+$ (or all $-$) configurations, adding
updates only improves mixing. More formally they proved the following
proposition.

\begin{proposition}
Let $u_1,\ldots, u_m$ be a sequence of vertices, and let $i_1,\ldots,
i_l$ be a strictly increasing subsequence of $1,\ldots,m$. Let $X^+$
(resp., $X^{-}$) be a random configuration constructed by starting from
the all $+$ (resp., all $-$) configuration and running Gibbs updates
sequentially on $u_1,\ldots,u_m$. Similarly let $Y^+$ (resp., $Y^{-}$)
be a random configuration constructed by starting from the all $+$
(resp., all $-$) configuration and running Gibbs updates sequentially on
the vertices $u_{i_1},\ldots,u_{i_m}$. Then
\[
Y^{-} \preccurlyeq X^{-} \preccurlyeq X^{+}
\preccurlyeq Y^{+},
\]
where $A \preccurlyeq B$ denotes that $A$ stochastically dominates $B$
in the partial ordering of configurations.
\end{proposition}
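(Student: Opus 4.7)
The plan is to derive the whole chain of stochastic inequalities from a single monotonicity lemma about Gibbs updates applied to distributions whose Radon--Nikodym derivative against the Ising measure $\pi$ is a monotone function. Throughout let $G_v \mu$ denote the result of applying one heat-bath update at $v$ to a distribution $\mu$.

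\textbf{Key Lemma.} If $f \eqdef d\mu/d\pi$ is (weakly) increasing in the pointwise ordering on $\{\pm\}^V$, then for every vertex $v$: (a) $d(G_v\mu)/d\pi$ is still increasing, and (b) $G_v\mu$ is stochastically dominated by $\mu$.

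For (a), direct calculation gives $G_v\mu(\sigma)=\pi(\sigma_v\mid\sigma_{V\setminus v})\,\mu(\sigma_{V\setminus v})$, so $d(G_v\mu)/d\pi$ depends only on $\sigma_{V\setminus v}$ and equals $\E_\pi[f\mid\sigma_{V\setminus v}]$. If $\sigma_{V\setminus v}\leq\sigma'_{V\setminus v}$, then on one hand $\pi(\cdot\mid\sigma'_{V\setminus v})$ stochastically dominates $\pi(\cdot\mid\sigma_{V\setminus v})$ at the site $v$ by ferromagnetism, and on the other $f(\cdot,\sigma'_{V\setminus v})$ is increasing in $\sigma_v$ and pointwise at least $f(\cdot,\sigma_{V\setminus v})$; chaining these two observations shows the conditional expectation is monotone. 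For (b), for any increasing test function $g$,
\[
\int g\,d\mu-\int g\,dG_v\mu \;=\; \E_\pi\!\bigl[g\bigl(f-\E_\pi[f\mid\sigma_{V\setminus v}]\bigr)\bigr] \;=\; \E_\pi\!\bigl[\Cov_\pi(g,f\mid\sigma_{V\setminus v})\bigr] \;\geq\; 0,
\]
where the final inequality is FKG applied to the two-point conditional law of $\sigma_v$ given $\sigma_{V\setminus v}$ (both $g$ and $f$ being increasing in $\sigma_v$).

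\textbf{Main argument.} The distribution $\delta_+$ satisfies $\delta_+/\pi$ increasing, since this ratio is nonzero only at the maximum of the poset. By induction, the increasing-ratio property is preserved along arbitrary sequences of heat-bath updates starting from $\delta_+$. Now compare $X^+$ and $Y^+$ by induction on $m-l$: at the first index where the $X$-sequence performs an update that the $Y$-sequence omits, part~(b) of the lemma produces a distribution stochastically smaller than $Y$'s, and every subsequent common update preserves this ordering via the monotone coupling described in Section~\ref{s:monotoneCoupling}. Therefore $X^+$ is stochastically dominated by $Y^+$. The analogous argument with ``decreasing'' replacing ``increasing'' and $\delta_-$ in place of $\delta_+$ shows $Y^-$ is stochastically dominated by $X^-$. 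Finally $X^-$ is stochastically dominated by $X^+$ by applying the same update sequence to both under the monotone coupling of Section~\ref{s:monotoneCoupling}. Concatenating these three dominations produces the claimed sandwich.

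The main obstacle is part~(a) of the key lemma, since one must simultaneously track monotonicity of both the Ising conditional kernel (via ferromagnetism) and the inductively-maintained function $f$. Once (a) is in hand, (b) is a one-line FKG computation, and the final chain of dominations follows by an induction on the number of censored updates together with the symmetry between the $+$ and $-$ starts.
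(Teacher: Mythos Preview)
The paper does not actually prove this proposition; it only states it and refers the reader to Peres's then-unpublished notes \cite{Peres:05}. Your argument is correct and is precisely the Peres--Winkler proof alluded to there: maintain the invariant that the density with respect to $\pi$ is increasing, show via the conditional FKG/covariance identity that each additional heat-bath update can only push the law downward in stochastic order, and then peel off the censored updates one at a time using the monotone coupling to carry the domination through the remaining common updates. One small remark on exposition: your sentence ``every subsequent common update preserves this ordering'' tacitly assumes only one update is omitted, whereas in general several may be; the induction on $m-l$ you announce is exactly what handles this (remove one censored update at a time, using part~(a) to guarantee the increasing-density hypothesis is available at each stage), so you may want to make that chaining explicit.
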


This result in fact holds for random sequences of vertices of random
length and random subsequences, provided the choice of sequence is
independent of the choices that the Gibbs sampler makes. The result
remains unpublished, but its proof can be found in \cite{Peres05}.

\subsection{\texorpdfstring{Proof of Theorem \protect\ref{tgeneral}}{Proof of Theorem 3}}


Let $X_t^{+},X_t^{-}$, denote the Gibbs sampler on $G$ started,
respectively, from the all $+$ and $-$ configurations,
coupled using the monotone coupling described in Section\vadjust{\goodbreak} \ref
{smonotoneCoupling}. Fix some vertex $v\in
G$. We will define two new censored chains $Z_t^+$ and $Z_t^-$ starting
from the all $+$ and all $-$ configurations, respectively. Take $S \geq
0$ to be some arbitrary constant. Until time $S$ we set both $Z_t^+$
and $Z_t^-$ to be simply equal to $X_t^{+}$ and $X_t^{-}$, respectively.
After time $S$ all updates outside of $B(v,R-1)$ are censored; that is,
$Z_t^+$ and $Z_t^-$ remain unchanged on $V \setminus B(v,R-1)$ after
time $S$, but inside $B(v,R-1)$ share all the same updates with
$X_t^{+}$ and $X_t^{-}$.

In particular, this means that for $Z^+_t$ and $Z^-_t$ the spins on
$S(v,R)$ are fixed after time $S$.
By monotonicity of the updates we have $Z_t^+ \geq Z_t^-$ and $X_t^+
\geq
X_t^- $ for
all $t$. After time $S$ the censored processes are simply the Gibbs
sampler on $B(v,R-1)$ with boundary condition $X_S^\pm(S(v,R))$. By
assumption we have that the mixing time of this dynamics is bounded
above by $T$ and by equation (\ref{emixingSubmult}). If $t = T \lceil
\log8\mathfrak{X}\rceil$, then
%
\begin{equation}
\label{emixingApprox}\qquad \bigl\llvert P \bigl(Z^+_{S+t}(v)=+\mid
\mathcal{F}_S \bigr) - P \bigl(\sigma_v=+\mid
\sigma_{S(v,R)} = X^+_S\bigl(S(v,R)\bigr) \bigr) \bigr\rrvert
\leq\frac1{8\mathfrak{X}},
\end{equation}
and similarly for $Z^-$ where $\mathcal{F}_S$ denotes the
sigma-algebra generated by the updates up to time $S$. Now
%
\begin{eqnarray}
\label{egeneralA1}
&&
P \bigl( Z^+_{S+t}(v) \neq Z^-_{S+t}(v)
\mid\mathcal{F}_S \bigr)
\nonumber
\\
&&\qquad =P \bigl( Z^+_{S+t}(v) =+ \mid\mathcal{F}_S \bigr) - P
\bigl( Z^-_{S+t}(v) =+ \mid\mathcal{F}_S \bigr)
\nonumber\\[-8pt]\\[-8pt]
&&\qquad = I \bigl(X^+_S\bigl(B(v,R)\bigr)\neq X^-_S
\bigl(B(v,R)\bigr) \bigr)
\nonumber
\\
&&\qquad\quad{} \times \bigl[P \bigl( Z^+_{S+t}(v) =+ \mid\mathcal{F}_S
\bigr) - P \bigl( Z^-_{S+t}(v) =+ \mid\mathcal{F}_S
\bigr) \bigr],\nonumber
\end{eqnarray}
since if $X^+_S(B(v,R)) = X^-_S(B(v,R))$, then the censored processes
remains equal within $B(v,R)$ for all time as they receive the same
updates. Now we split up the right-hand side by the triangle
inequality:
%
\begin{eqnarray}
\label{egeneralA}
&&
I \bigl(X^+_S\bigl(B(v,R)\bigr)\neq
X^-_S\bigl(B(v,R)\bigr) \bigr)
\nonumber
\\
&&\quad{} \times \bigl[P \bigl( Z^+_{S+t}(v) =+ \mid\mathcal{F}_S
\bigr) - P \bigl( Z^-_{S+t}(v) =+ \mid\mathcal{F}_S
\bigr) \bigr]
\nonumber
\\
&&\qquad\leq I \bigl(X^+_S\bigl(B(v,R)\bigr)\neq X^-_S
\bigl(B(v,R)\bigr) \bigr)
\nonumber
\\
&&\qquad\quad{} \times \bigl[ \bigl\llvert P \bigl(Z^+_{S+t}(v)=+\mid
\mathcal{F}_S \bigr) - P \bigl(\sigma_v=+\mid
\sigma_{S(v,R)} = X^+_S\bigl(S(v,R)\bigr) \bigr) \bigr\rrvert
\\
&&\hspace*{15.5pt}\qquad\quad{} + \bigl| P \bigl(\sigma_v=+\mid\sigma_{S(v,R)} =
X^+_S\bigl(S(v,R)\bigr) \bigr)
\nonumber
\\
&&\hspace*{30pt}\qquad\quad{} - P \bigl(\sigma_v=+\mid\sigma_{S(v,R)} =
X^-_S\bigl(S(v,R)\bigr) \bigr) \bigr|
\nonumber
\\
&&\hspace*{16.3pt}\qquad\quad{} + \bigl\llvert P \bigl(Z^-_{S+t}(v)=+\mid\mathcal{F}_S
\bigr) - P \bigl(\sigma_v=+\mid\sigma_{S(v,R)} =
X^-_S\bigl(S(v,R)\bigr) \bigr) \bigr\rrvert \bigr].\nonumber
\end{eqnarray}
Now
%
\begin{eqnarray}
\label{egeneralB}
&&
E I \bigl(X^+_S\bigl(B(v,R)\bigr)\neq
X^-_S\bigl(B(v,R)\bigr) \bigr)
\nonumber
\\
&&\quad{}\times \bigl\llvert P \bigl(Z^+_{S+t}(v)=+\mid\mathcal{F}_S
\bigr) - P \bigl(\sigma_v=+\mid\sigma_{S(v,R)} =
X^+_S\bigl(S(v,R)\bigr) \bigr)\bigr\rrvert
\nonumber
\\
&&\qquad \leq\frac1{8\mathfrak{X}} E I \bigl(X^+_S\bigl(B(v,R)\bigr)\neq
X^-_S\bigl(B(v,R)\bigr) \bigr)
\\
&&\qquad \leq\frac1{8\mathfrak{X}} \sum_{u \in B(v,R)} P
\bigl(X^+_S(u)\neq X^-_S(u) \bigr)
\nonumber
\\
&&\qquad \leq\frac18 \max_{u\in V} P \bigl(X^+_S(u)\neq
X^-_S(u) \bigr),\nonumber
\end{eqnarray}
where the first inequality follows from equation (\ref
{emixingApprox}), the second by a union bound and the final inequality
follows from the volume assumption, and similarly for $Z^-$.

If $\eta^+ \geq\eta^-$ are two configurations on $S(v,R)$ which
differ only on the set $U\subseteq S(v,R)$, then by changing the
vertices one at a time by the spatial mixing condition, we have that
\[
P\bigl(\sigma_v=+\mid\sigma_{\Lambda}=\eta^+\bigr)- P\bigl(
\sigma_v=+\mid\sigma_{\Lambda}=\eta^-\bigr) \leq\sum
_{u\in U} a_u.
\]
It follows that
%
\begin{eqnarray}
\label{egeneralC}
&&E \bigl| P \bigl(\sigma_v=+\mid\sigma_{S(v,R)}
= X^+_S\bigl(S(v,R)\bigr) \bigr)
\nonumber
\\
&&\quad\hspace*{0pt}{} - P \bigl(\sigma_v=+\mid\sigma_{S(v,R)} =
X^-_S\bigl(S(v,R)\bigr) \bigr) \bigr|
\\
&&\qquad \leq\E\sum_{u \in B(v,R)} a_u I
\bigl(X^+_S(u)\neq X^-_S(u) \bigr) \leq\frac14
\max_{u\in V} P \bigl(X^+_S(u)\neq X^-_S(u)
\bigr).\nonumber
\end{eqnarray}
Combining equations (\ref{egeneralA1}), (\ref{egeneralA}), (\ref
{egeneralB}) and (\ref{egeneralC}), we have that
\[
P \bigl( Z^+_{S+t}(v) \neq Z^-_{S+t}(v) \bigr) \leq\frac12
\max_{u\in V} P \bigl(X^+_S(u)\neq X^-_S(u)
\bigr).
\]
By the censoring lemma, we have that $Z_t^+ \succcurlyeq X_t^+
\succcurlyeq
X_t^- \succcurlyeq Z_t^-$, and so
\[
P \bigl( X^+_{S+t}(v) \neq X^-_{S+t}(v) \bigr)\leq P \bigl(
Z^+_{S+t}(v) \neq Z^-_{S+t}(v) \bigr).
\]
Combining the previous two equations and taking a maximum over $v$, we
have that
%
\begin{equation}
\label{econtraction} \max_{u\in V} P \bigl( X^+_{S+t}(u) \neq
X^-_{S+t}(u) \bigr) \leq \frac12 \max_{u\in V} P
\bigl(X^+_S(u)\neq X^-_S(u) \bigr).
\end{equation}
Now $S$ is arbitrary, so we iterate equation (\ref{econtraction}) to
get that
\[
\max_{u\in V} P \bigl( X^+_{t(3+\lceil\log_2 n \rceil)}(u) \neq X^-_{t(3+\lceil\log_2 n \rceil)}(u)
\bigr) \leq2^{-3-\lceil\log
_2 n \rceil} \leq\frac1{2en}.
\]
Taking a union bound over all $u\in V$, we have that
\[
P \bigl( X^+_{t(3+\lceil\log_2 n \rceil)} \not\equiv X^-_{t(3+\lceil\log_2 n \rceil)} \bigr) \leq
\frac1{2e},
\]
and so the mixing time is bounded above by $T \lceil\log8\mathfrak
{X} \rceil ( 3 + \log_2 n  )$. Since the expected number
of disagreements, and hence the total variation distance from
stationarity, decays exponentially with a rate of at least $t^{-1}\log
2$, that is,
\[
E \# \bigl\{ u \in V\dvtx X^+_{s}(u) \neq X^-_{s}(u) \bigr\}
\leq2 n e^{-s t^{-1} \log2},
\]
it follows\vspace*{1pt} by standard results (see, e.g., Corollary 12.6 of \cite
{LevPerWil09}) that the spectral gap of the chain is bounded below by $t^{-1}
\log2$.

\subsection{\texorpdfstring{Proofs of Theorems \protect\ref{tmain} and \protect\ref{tER}}{Proofs of Theorems 1 and 2}}
We now prove Theorems~\ref{tmain} and~\ref{tER}, except
for the result for infinite graphs which will be proven in Section \ref
{sinfinite}.
Theorem~\ref{tmain} follows from (\ref{eqgeneral}) and the
following lemmas.
%
\begin{lemma} \label{lemmainVol}
Let $G=(V,E)$ be a graph of maximal degree $d$. Then $\Vol(R,\mathfrak
{X})$ holds with
\[
\mathfrak{X}= 1+d\sum_{\ell=1}^R(d-1)^{\ell-1}.
\]
\end{lemma}
%
\begin{lemma} \label{lemmainLM}
Let $G=(V,E)$ be a graph of maximal degree $d$, and consider the
ferromagnetic Ising model on $G$ with arbitrary external fields.
Then $\LM(R,T)$ holds with
\[
T=80 d^3 \mathfrak{X}^3 e^{5\beta d(\mathfrak{X}+1)},\qquad
\mathfrak {X}= 1+d\sum_{\ell=1}^R(d-1)^{\ell-1}.
\]
\end{lemma}
%
\begin{lemma} \label{lemmainSM}
Let $G=(V,E)$ be a graph with maximum degree $d$, and let $v \in V$.
Suppose that $(d-1)\tanh\beta< 1$. Let $R$ be an integer large enough
so that
%
\begin{equation}
\label{ediameterBound} \frac{d(d-1)^{R-1} \tanh^{R} \beta}{1-(d-1)\tanh\beta} \leq\frac1{4}.
\end{equation}
Then $\SM(R)$ holds.
\end{lemma}

We note that Lemma~\ref{lemmainVol} is trivial. As for Lemma \ref
{lemmainLM}, it is easy to prove a bound with a finite $T$ depending
on $R$, only assuming all external fields are bounded. We provide an
analysis with a tighter bound which applies also when the external
fields are not bounded. The proof is based on cut-width. The main step
is proving Lemma~\ref{lemmainSM}, which uses recursions on trees, a
comparison argument from \cite{BeKeMoPe05} and the Weitz tree.

The upper bound in Theorem~\ref{tER} follows from (\ref{eqgeneral})
and the following lemmas.

\begin{lemma} \label{lemERVol}
Let $G$ be a random graph distributed as $G(n,d/n)$.
Then $\Vol(R,\mathfrak{X})$ holds with high probability over $G$ with
\[
R = (\log\log n)^2,\qquad \mathfrak{X} = d^{R} \log n.\vadjust{\goodbreak}
\]
\end{lemma}
%
\begin{lemma} \label{lemERLM}
Let $G$ be a random graph distributed as $G(n,d/n)$ where $d$ is fixed.
There exists a constant $C(d)$
such that for $\LM(R,T)$ holds with high probability over $G$ with
\[
R = (\log\log n)^2,\qquad T = e^{10 \beta C(d) {\log n}/{\log
\log n}}.
\]
\end{lemma}
%
\begin{lemma} \label{lemERSM}
Let $G$ be a random graph distributed as $G(n,d/n)$ where $d$ is fixed
and $d \tanh\beta< 1$.
Then $\SM(R,T)$ holds with high probability over $G$ with $R = (\log
\log n)^2$.
\end{lemma}
The main challenge in extending the proof from bounded degree graphs to
$G(n,d/n)$
is obtaining a good enough control on the local geometry of the graph.
In particular, we obtain very tight tail estimates on the cut-width of
a Galton--Watson tree with Poisson offspring\vspace*{1pt} distribution of $(\log
\log n)^2$ levels.
A lower bound on the mixing time of $n^{1+\Omega(1/{\log\log
n})}$ was shown in~\cite{MosselSly09} by analyzing large star
subgraphs on $G(n,d/n)$. Recall that a star is a graph which is a
rooted tree with depth $1$ and that an Erd\H{o}s--R\'enyi random graph
with high probability there are stars with degree $\Omega(\frac{\log
n}{\log\log n})$.

\section{Volume growth}
We begin with verification of the Volume growth condition. Since
Lemma~\ref{lemmainVol} is trivial, this section will be devoted to
the proof of Lem\-ma~\ref{lemERVol} and other geometric properties of
random graphs. The reader who is interested in the proof of
Theorem~\ref{tmain} only may skip the remainder of this section.

The results stated in the section will require the notion of
\textit{tree excess}. For a graph $G$ we let $t(G)$ denote the
\textit{tree excess} of $G$, that is,
\[
t(G) = |E| - |V| + 1.
\]
Note that the second item of the following lemma implies the statement
of Lem\-ma~\ref{lemERVol}.

\begin{lemma}\label{lERGraphProperties}
Let $d$ be fixed, and let $G$ be a random graph distributed as
$G(n,d/n)$. The following hold with high probability over $G$ when $R=(
\log\log n)^2$ for all $v\in G$:
\begin{itemize}
\item
$B(v,R)$ has a spanning tree $T(v,R)$ which is stochastically dominated
by a Galton--Watson branching process
with offspring distribution $\operatorname{Poisson}(d)$.
\item The tree excess satisfies $t(v,R)\leq1$.
\item The volume of $B(v,R)$ is bounded by
\[
\bigl|B(v,R)\bigr|\leq d^{R} \log n.
\]
\end{itemize}
\end{lemma}

\begin{pf}
We construct a spanning tree $T(v,R)$ of $B(v,R)$ in a standard
manner. Take some arbitrary ordering of the vertices of $G$. Start
with the vertex $v$ and attach it to all its neighbors in $G$. Now
take the minimal vertex in $S(v,1)$, according to the ordering, and
attach it to all its neighbors in G which are not already in the
tree. Repeat this for each of the vertices in $S(v,1)$ in
increasing order. Repeat this for $S(v,2)$ and continue until
$S(v,R-1)$ which completes $T(v,R)$. By construction this is a
spanning tree for $B(v,R)$. The construction can be viewed as a
breadth first search of $B(v,R)$ starting from $v$ and exploring
according to the vertex ordering. By a standard argument $T(v,R)$ is
stochastically dominated
by a Galton--Watson branching process with offspring distribution
$\operatorname{Poisson}(d)$ with $R$ levels thus proving the first statement.

Since the volume of $B(v,R)$ equals the volume of $T(v,R)$, it suffices
to bound the later.
For this we use a variant of an argument from \cite{MosselSly09}.
We let $Z(r)$ denote the distribution of the
volume of a Galton--Watson tree of depth $r$ with off spring
distribution $N$, where $N$ is $\operatorname{Poisson}(d)$.
We claim that for all $t > 0$, it holds that
%
\begin{equation}
\label{eqexpmoment} \sup_r E\bigl[\exp\bigl(t
Z_r d^{-r}\bigr)\bigr] < \infty.
\end{equation}
Writing $s = s(t)$ for the value of the supremum, if follows from
Markov's inequality that
\[
s \geq P\bigl[Z_R \geq R^d \log n\bigr] \exp(t \log n)
\]
and so
\[
P\bigl[Z_R \geq R^d \log n\bigr] \leq s \exp(-t \log n),
\]
which is smaller than $o(1/n)$ if $t > 1$. This implies that $B(v,R)
\leq R^d \log n$ for all $v$ by a union bound and proves the second
statement of the lemma.

For (\ref{eqexpmoment}), let $N_i$ be independent copies of $N$
and note that
%
\begin{eqnarray}
\label{eqexpmomentrecursion} E\exp(t
Z_{r+1}) &=& E\exp\Biggl(\sum_{i=0}^{Z_r}
t d^{-(r+1)} N_i\Biggr) \nonumber\\
&=& E\Biggl[E\Biggl[\exp\Biggl(\sum
_{i=0}^{Z_r} t d^{-(r+1)} N_i\Biggr) \Bigm|
Z_r\Biggr]\Biggr]
\nonumber\\[-8pt]\\[-8pt]
&=& E\bigl[\bigl(E\bigl[\exp\bigl(t d^{-r+1} N\bigr)\bigr]
\bigr)^{Z_r}\bigr]
\nonumber
\\
&=& E \exp\bigl(Z_r \log\bigl(E\exp\bigl(t d^{-(r+1)} N\bigr)
\bigr)\bigr),\nonumber
\end{eqnarray}
which recursively relates the exponential moments of $Z_{r+1}$ to
the exponential moments of $Z_r$. In particular since all the
exponential moments of $Z_1$ exist, $E \exp(t Z_{r})<\infty$ for all
$t$ and $r$. When $0 < s \leq1$
%
\begin{equation}
\label{eqexpmomentbound} E\exp(s N) =
\sum_{i=0}^\infty\frac{s^i E N^i}{i!} \leq1 +
sd + s^2 \sum_{i=2}^\infty
\frac{E N^i}{i!} \leq\exp\bigl(sd(1+\alpha s)\bigr)
\end{equation}
provided $\alpha$ is sufficiently large. Now fix a $t$ and let
$t_r=t\exp(2\alpha t \sum_{i=r+1}^\infty d^{-i})$. For some
sufficiently large $j$ we have that $\exp(2\alpha t
\sum_{i=r+1}^\infty d^{-i})<2$ and $t_r d^{-(r+1)}<1$ for all $r\geq
j$. Then for $r\geq j$ by equations (\ref{eqexpmomentrecursion})
and (\ref{eqexpmomentbound}),
\begin{eqnarray*}
E\exp\bigl(t_{r+1} Z_{r+1} d^{-(r+1)}\bigr) &=& E \exp
\bigl(\log\bigl(E\exp\bigl(t_{r+1} d^{-(r+1)} N_i
\bigr)\bigr)Z_r\bigr)
\\
&\leq& E\exp\bigl(t_{r+1}\bigl(1+\alpha t_{r+1}
d^{-(r+1)}\bigr) Z_{r} d^{-r}\bigr)
\\
&\leq& E\exp\bigl(t_{r+1}\bigl(1+2\alpha t d^{-(r+1)}\bigr)
Z_{r} d^{-r}\bigr)
\\
&\leq& E\exp\bigl(t_{r} Z_{r} d^{-r}\bigr)
\end{eqnarray*}
and so
\[
\sup_{r\geq j} E\exp\bigl(t Z_{r} d^{-r}\bigr) \leq
\sup_{n\geq j} E\exp\bigl(t_{r} Z_{r}
d^{-r}\bigr) = E\exp\bigl(t_{j} Z_{j}
d^{-j}\bigr) < \infty,
\]
which completes the proof of (\ref{eqexpmoment}).

It remains to bound the tree excess. In the construction of $T(v,R)$
there may be some edges in
$B(v,R)$ which are not explored and so are not in $T(v,R)$.
Each edge between $u,w\in V(v,R)$ which is not explored
in the construction of $T(v,R)$ is present in $B(v,R)$ independently
with probability $d/n$. There are at most $d^{2R}$ unexplored edges and
\[
P\bigl(\operatorname{Binomial}\bigl(d^{2R},d/n\bigr) > 1\bigr) \leq
d^{4R} (d/n)^2 \leq n^{-2+o(1)}
\]
for any fixed $d$. So by a union bound with high probability we have
that $t(v,R) \leq1$ for all $v$.
\end{pf}

\section{Local mixing}
In this section we prove Lemmas~\ref{lemmainLM} and~\ref{lemERLM}.
The proof that the local mixing condition holds for graphs of bounded
degree, bounded volume and bounded external field is standard. Indeed
the reader who is interested in Theorem~\ref{tmain} for models with
bounded external fields may skip this section.

\subsection{Cut-width bounds}
The main tool in bounding the mixing time will be the notion of
cut-width used in \cite{BeKeMoPe05}.
Recall that the \textit{cut-width} of a finite graph $G=(V,E)$
\[
\min_{\pi\in S(n)} \max_{1 \leq i \leq n-1} \bigl|\{ v_{\pi(j)}\dvtx j \leq i \}
\times\{ v_{\pi(j)}\dvtx j > i \} \cap E\bigr|,
\]
where the minimum is taken over all permutations of the labels of the
vertices $v_1,\ldots,v_n$ in $V$.
%
%

We will prove the following result which generalizes the results
of \cite{BeKeMoPe05} to the case with boundary conditions. The proof
follow the ones given in \cite{BeKeMoPe05} and \cite{Martinelli99}.

\begin{lemma}\label{lcutwidth}
Consider the Ising model on G with interaction strengths bounded by
$\beta$, arbitrary external field, cut-width $\mathcal{E}$ and
maximal degree $d$. Then the relaxation time of the discrete time Gibbs
sampler is at most $n^2e^{4\beta(\mathcal{E}+d)}$.
\end{lemma}

\begin{pf}
We follow the notation of \cite{KeMoPe01}. Fix an ordering ``$<$'' of
the vertices in $V$ which achieves the cut-width. Define a canonical
path $\gamma(\sigma,\eta)$ between two configurations $\sigma,\eta
$ as follows: let $v_1 < v_2 < \cdots< v_\ell$ be the vertices on
which $\sigma$ and $\eta$ differ. The $k$th configuration in the path
$\eta=\sigma^{(0)}, \sigma^{(1)},\ldots, \sigma^{(\ell)}$ is
defined by $\sigma^{(k)}_v = \sigma_v$ for $v \leq v_k$ and $\sigma^{(k)}_v = \eta_v$ for $v>v_k$. Then by the method of canonical paths
(see, e.g., \cite{JerrumSinclair89,Martinelli99}), the relaxation
time is bounded by
\[
\tau\leq n \sup_e \sum_{\sigma,\eta\dvtx e\in\gamma(\sigma,\eta)}
\frac{P(\sigma)P(\eta)}{Q(e)},
\]
where the supremum is over all pairs of configurations $e=(x,y)$ which
differ at a single vertex and where $e\in\gamma(\sigma,\eta)$
denotes that $x$ and $y$ are consecutive configurations in the
canonical path $\gamma(\sigma,\eta)$ and $Q((x,y))=P(x)P(x\to y)$.

Let $e=(x,y)$ be a pair of configurations which differ only at $v$. For
a pair of configurations $\sigma,\eta$ let $\varphi_e(\sigma,\eta
)$ denote the configuration which is given by $\varphi_e(\sigma,\eta
)_{v'}=\eta_{v'}$ for $v'<v$ and $\varphi_e(\sigma,\eta
)_{v'}=\sigma_{v'}$ for $v' \geq v$. Further, by construction we have
that for any $u\in V$, that the unordered pairs $\{\sigma_u,\eta_u\}$
and $\{x_u,\varphi_e(\sigma,\eta)_{u}\}$ are equal, and so
\[
\sum_u h_u (\sigma_u
+ \eta_u) =\sum_u
h_u \bigl(x_u + \varphi_e(\sigma ,
\eta)_{u}\bigr).
\]
Also if $(u,u')\in E$ is such that $u,u'<v$ or $u,u'>v$, then again by
the labeling we have that
\[
\sigma_u \sigma_{u'}+ \eta_u
\eta_{u'} = x_u x_{u'}+ \varphi_e(
\sigma,\eta)_{u}\varphi_e(\sigma,\eta)_{u'}.
\]
Combining these results we have that
\begin{eqnarray*}
&&\frac{P(\sigma)P(\eta)}{P(x)P(\varphi_e(\sigma,\eta))}
\\
&&\qquad= \frac{\exp [ \sum_{\{u,u'\}\in
E} \beta_{u,u'} (\sigma_u \sigma_{u'} + \eta_u \eta_{u'})+ \sum_u
h_u (\sigma_{u} + \eta_{u}) ]}{\exp [ \sum\beta_{u,u'}
(x_u x_{u'} + \varphi_e(\sigma,\eta)_u \varphi_e(\sigma,\eta
)_{u'})+ \sum h_u (x_{u} + \varphi_e(\sigma,\eta)_{u}) ]}
\\
&&\qquad\leq e^{4 \mathcal{E}(\beta)}
\end{eqnarray*}
as the only terms which don't cancel are those relating to edges
$(u,u')$ with $u<v<u'$ or $u'<v<u$, of which there are only $\mathcal{E}$.
A crude bound on the transition probabilities gives that
\[
P(x\to y) \geq\frac1{n} \frac{e^{h_v y_v - d \beta}}{e^{h_v y_v - d
\beta}+e^{-h_v y_v + d \beta}}.
\]
Then
\begin{eqnarray*}
\sum_{\sigma,\eta\dvtx e\in\gamma(\sigma,\eta)} \frac{P(\sigma
)P(\eta)}{Q(e)} &\leq&
e^{4 \mathcal{E}\beta} \frac1{P(x\to y)}\sum_{\sigma,\eta\dvtx
e\in\gamma(\sigma,\eta)} P\bigl(
\varphi_e(\sigma,\eta)\bigr)
\\
&\leq& n e^{4 \mathcal{E}\beta} \bigl(1+ e^{-2h_v y_v + 2d \beta}\bigr)\sum
_{\sigma,\eta\dvtx e\in\gamma(\sigma,\eta)} P\bigl(\varphi_e(\sigma,\eta)\bigr).
\end{eqnarray*}
The labeling is constructed such that for each $e$ and configuration
$z$ there is at most on pair $(\sigma,\eta)$ with $e\in\gamma
(\sigma,\eta)$ so that $\varphi_e(\sigma,\eta)=z$. Also we have
that $\varphi_e(\sigma,\eta)_{v}=\sigma_v=y_v$ and so
\begin{eqnarray*}
\sum_{\sigma,\eta\dvtx e\in\gamma(\sigma,\eta)} P\bigl(\varphi_e(\sigma ,
\eta)\bigr) &\leq&\sum_{\sigma\dvtx\sigma_v=y_v} P(\sigma) \leq
\frac{e^{h
y_v+d \beta}}{e^{h y_v+d \beta}+e^{-h y_v-d \beta}}\\
&=&\frac1{1+e^{-2h
y_v-2d \beta}},
\end{eqnarray*}
where the inequality holds and hence
\[
\tau\leq n^2 e^{4 \mathcal{E}\beta} \frac{1+ e^{-2h_v y_v + 2d
\beta}}{1+e^{-2h\sigma_v-2d \beta}} = n^2
e^{4 \mathcal{E}\beta
}\frac{1+ e^{-2h_v \sigma_v} e^{2d \beta}}{1+e^{-2h\sigma_v} e^{-2d
\beta}} \leq n^2 e^{4 \mathcal{E}\beta+4d\beta}
\]
as required.
\end{pf}

We now need to establish a bound to relate the relaxation time to the
mixing time. While we would like to apply equation (\ref
{eqtauandspectral}) directly to Lemma~\ref{lcutwidth}, if the
external fields go to infinity, the right-hand side of equation (\ref
{eqtauandspectral}) also goes to infinity. So that our results holds
for any external field, we establish the following lemma.

\begin{lemma}\label{lcutwidthMix}
Consider the Ising model on G with interaction strengths bounded by
$\beta$, cut-width $\mathcal{E}$, arbitrary external field and
maximal degree $d$. Then the mixing time of the Gibbs sampler satisfies
\[
\tau_{\mathrm{mix}} \leq80 n^3 e^{5\beta(\mathcal{E}+d)}.
\]
\end{lemma}


%

\begin{pf}
Define $\hb=3\log n + 6\beta\mathcal{E} + 4d\beta+10$, and let $U$
denote the set of vertices $U=\{v\in V\dvtx|h_v| \geq\hb\}$. These are
the set of vertices with external fields so strong that it
is\vspace*{1pt} highly unlikely that they are updated to a value other
than $\operatorname{sign}(h_v)$. Let $\tilde{G}$ denote the graph
induced by the vertex set $\tilde{V}=V\setminus U$, and let $\tilde{P}$
denote the Ising model with the same interaction strengths $\beta_{uv}$
but with modified external field
\[
\tilde{h}_v =h_v + \sum
_{u\in U\dvtx(u,v)\in E} \beta_{uv} \operatorname{sign}(h_u).
\]
This is, of course, just the original Ising model restricted to $\tilde
{V}$ with external field given by $\sigma_u=\operatorname{sign}(h_u)$ for
$u\in U$. We now analyze the continuous time Gibbs sampler of $\tilde
{P}$. By Lemma~\ref{lcutwidth} its relaxation time satisfies
\[
\tilde{\tau} \leq n e^{4\beta(\mathcal{E}+d)}
\]
since restricting to $\tilde{G}$ can only decrease the cut-width and
maximum degree and since the discrete and continuous relaxation times
differ by a factor of~$n$.
To invoke~(\ref{eqtauandspectral}), we bound $\min_\sigma\tilde
{P}(\sigma)$. By our construction, we have that
\[
\max_{v\in\tilde{V}} | \tilde{h}_v| \leq\hb+d\beta.
\]
Now
\[
\min_{\sigma\in\{+,-\}^{\tilde{V}}} \tilde{H}(\sigma) = \min_{\sigma} \sum
_{\{v,u\}\in
\tilde{E}} \beta_{u,v} \sigma(v)\sigma(u) + \sum
_{v\in\tilde{V}} h_v \sigma(v) \geq-n(2d\beta+
\hb)
\]
and similarly $ \max_{\sigma} \tilde{H}(\sigma) \leq n(2d\beta+\hb
)$. Now the normalizing constant $\tilde{Z}$ satisfies
\[
\tilde{Z}=\sum_{\sigma\in\{+,-\}^{\tilde{V}}} \exp\bigl(\tilde {H}(\sigma)
\bigr) \leq2^n \exp\bigl(n(2d\beta+\hb)\bigr),
\]
so finally
\[
\min_{\sigma\in\{+,-\}^{\tilde{V}}} \tilde{P}(\sigma)\geq\frac
{\min_{\sigma} \exp(\tilde{H}(\sigma))}{\tilde{Z}}\geq2^{-n}
\exp \bigl(-n(4d\beta+2\hb)\bigr).
\]
By equation (\ref{eqtauandspectral}) this implies that the mixing
time of the continuous time Gibbs sampler on $\tilde{P}$ satisfies
\begin{eqnarray*}
\tilde{\tau}_{\mathrm{mix}} &\leq&\tilde{\tau} \biggl(1+\frac12 \log\Bigl(
\min_\sigma \tilde{P}(\sigma)^{-1}\Bigr) \biggr) \\
&\leq& n
e^{4\beta(\mathcal
{E}+d)} \biggl( 1 + \frac12 n(\log2 + 2d\beta+ \hb) \biggr).
\end{eqnarray*}
We set $T=8n^2 \hb e^{4\beta(\mathcal{E}+d)} \geq4 \tilde{\tau}_{\mathrm{mix}}$.

We now return to the continuous time dynamics on all $G$. Let $\mathcal
{A}$ denote the event that every vertex in $u\in U$ is updated at least
once before time $T$. The probability that a vertex $u$ is updated by
time $T$ is $1-e^{-T}$, and so by a union bound,
\[
P(\mathcal{A}) \geq1 - ne^{-T}\geq1 -ne^{-\hb} \geq1 -
e^{-10}.
\]
Let $\mathcal{B}$ be the event that for every vertex $u\in U$, every
update up to time $2T$ updates the spin to $\operatorname{sign}(h_u)$. For a
single vertex $u\in U$ and any configuration $\sigma$ when $u$ is updated,
%
\begin{equation}
\label{eupdateBound} P \bigl(u \mbox{ is updated to } {-}\operatorname{sign}(h_u)
\bigr) \leq\frac
{e^{-|h_u| + d\beta}}{e^{-|h_u| + d\beta}+e^{|h_u| - d\beta}} \leq e^{-2\hb+ 2d\beta}.
\end{equation}
The number of updates in $U$ up to time $2T$ is distributed as a
Poisson random variable with mean $2T|U|$ so
\begin{eqnarray*}
P(\mathcal{B})
&\geq& P\bigl(\operatorname{Po}\bigl(2Tne^{-2\hb+ 2d\beta} \bigr) =
0\bigr)
\\
& = &e^{-2Tne^{-2\hb+ 2d\beta} }
\\
&\geq& 1 -2Tne^{-2\hb+ 2d\beta}
\\
&\geq& 1 - 8n^3 \hb e^{4\beta(\mathcal{E}+d)-2\hb+ 2d\beta}
\\
&=& 1 - 8 \hb e^{-\hb-10}
\\
&>& 1-8e^{-10},
\end{eqnarray*}
where the last inequality follows from the fact that $e^x>x$.

Let $X_t$ denote the Gibbs sampler with respect to $P$, and let $Y_t$
be its restriction to~$\tilde{V}$.
Conditioned on $\mathcal{A}$ and $\mathcal{B}$ by time $T$, every
vertex in $U$ has been updated, and it has been updated to $
\operatorname{sign}(h_u)$ and remains with this spin until time $2T$. For $T\leq
t\leq2T$ let $Y_t$ denote the Gibbs sampler on $\tilde{V}$ with
respect to $\tilde{P}$ with initial condition $Y_T = X_T(\tilde{V})$.
From time $T$ to $2T$, couple $X_t$ and $Y_t$ with the same updates
(i.e., inside $\tilde{V}$ the same choice of $\{v_i\}$ and $\{U_i\}$in
the notation of Section~\ref{smonotoneCoupling}). Then conditioned on
$\mathcal{A}$ and $\mathcal{B}$, we have that $Y_t = X_t(\tilde{V})$
for $T\leq t \leq2T$.

We can now use our bound on the mixing time of the Gibbs sampler with
respect to $\tilde{P}$. Since $T \geq4\tilde{\tau}_{\mathrm{mix}}$, by
equation (\ref{emixingSubmult}) we have that
%
\begin{equation}
\label{emixingRestricted}
\bigl\| P (Y_{2T}=\cdot ) - \tilde{P} (\cdot)
\bigr\|_{\mathrm
{TV}} \leq e^{-4}.
\end{equation}
Under the stationary measure $P$, it follows from equation (\ref
{eupdateBound}) that for any $u\in U$,
\[
P \bigl(\sigma_u = \operatorname{sign}(h_u) \bigr)
\geq1-e^{2|h_u| -
2d\beta}
\]
and hence by a union bound,
%
\begin{equation}
\label{estationaryCondition} P \bigl(\sigma_u =
\operatorname{sign}(h_u), \forall u \in U \bigr) \geq
1-ne^{2\hb- 2d\beta}
\end{equation}
and so
\[
\bigl\| P \bigl(\sigma\in\cdot\mid\sigma_u = \operatorname{sign}(h_u),
\forall u \in U \bigr) - P (\sigma\in\cdot) \bigr\|_{\mathrm{TV}} \leq
ne^{2\hb- 2d\beta}.
\]
Since the projection of $P$ onto $\tilde{V}$ conditioning on $\sigma_u = \operatorname{sign}(h_u)$ for all $u\in U$ is simply $\tilde{P}$, it
follows that
\begin{eqnarray*}
&&\bigl\| P (X_{2T}=\cdot ) - \tilde{P} (\cdot) \bigr\|_{\mathrm
{TV}}
\\
&&\qquad \leq P\bigl(\mathcal{A}^c\bigr) + P\bigl(\mathcal{B}^c
\bigr)\\
&&\qquad\quad{} + \bigl\| P \bigl(\sigma\in \cdot\mid\sigma_u =
\operatorname{sign}(h_u), \forall u \in U \bigr) - P(\sigma\in\cdot)
\bigr\|_{\mathrm{TV}}
\\
&&\qquad\quad{} + \bigl\| P (Y_{2T}\in\cdot ) - \tilde{P} (\sigma\in \cdot)
\bigr\|_{\mathrm{TV}}
\\
&&\qquad\leq9e^{-10}+ ne^{2\hb- 2d\beta} + e^{-4}
\\
&&\qquad\leq\frac1{2e},
\end{eqnarray*}
which establishes $2T$ as an upper bound on the mixing time $\tau_{\mathrm{mix}}$. By a crude bound, $\hb\leq10ne^{\beta(d+\mathcal{E})}$,
which establishes
\[
\tau_{\mathrm{mix}} \leq2T \leq8n^2 \hb e^{4\beta(\mathcal{E}+d)} \leq80
n^3 e^{5\beta(\mathcal{E}+d)}
\]
as required.
\end{pf}

\subsection{Proof of local mixing for graphs of bounded degree}
We can now prove Lemma~\ref{lemmainLM}.

\begin{pf*}{Proof of Lemma~\ref{lemmainLM}}
The proof follows immediately from Lemma~\ref{lcutwidthMix}, applied
to the balls $B(v,R)$, and noting that $\mathcal{E}$ is always smaller
than the number of vertices in the graph which is bounded by $\mathfrak{X}$.
\end{pf*}

\subsection{Cut-width in random graphs and Galton--Watson trees}
The main result we prove in this section is the following.

\begin{lemma}\label{lcutwidthGW}
For every $d$ there exists a constant $C'(d) $ such that the following hold.
Let $T$ be the tree given by the first $\ell$ levels of a
Galton--Watson branching process tree with $\operatorname{Poisson}(d)$ offspring
distribution. Then $\mathcal{E}(T)$, the cut-width of $T$ is
stochastically dominated by the distribution $C'\ell+ \operatorname{Po}(d)$.
\end{lemma}

Using this result, it is not hard to prove the upper bound on the local
mixing of Lemma~\ref{lemERLM}.

\begin{pf*}{Proof of Lemma~\ref{lemERLM}}
We first note that by Lemma~\ref{lERGraphProperties} with high
probability for all $v$, the tree excess of the ball $B(v,R)$ is at
most one. This implies that the cut-width of $B(v,R)$ is at most $1$
more than the cut-width of the spanning tree $T(v,R)$ of $B(v,R)$ whose
distribution is dominated by a Galton--Watson tree with Poisson
offspring distribution with mean $d$. We thus conclude by
Lemma~\ref{lcutwidthGW} that with high probability for all $v \in V$,
the distribution of the cut-width of $B(v,R)$ is bounded by $C'R +
\operatorname{Po}(d)$. Since the probability that $\operatorname{Po}(d)$ exceeds $c
\log n / \log\log n$ for large enough $c$ is of order $n^{-2}$, we
obtain by a union bound that with high probability for all $v$ it holds
that $B(v,R)$ has a cut-width of at most $(c+C') \log n / \log\log n$.
Similarly with high probability, the maximal degree in $G$ is of order
$\log n / \log\log n$. Recalling that $\mathfrak{X}$ is at most $d^R
\log n$ and applying Lemma~\ref{lcutwidthMix} yields the required
result.
\end{pf*}

The proof of Lemma~\ref{lcutwidthGW} follows by induction from the
following two lemmas.
%
\begin{lemma}\label{lcutwidthTreeClaim}
Let $T$ be a tree rooted at $\rho$ with degree $m$, and let
$T_1,\ldots,T_m$ be the subtrees connected to the root. Then the
cut-width of $T$ satisfies
\[
\mathcal{E}(T) \leq\max_i \mathcal{E}(T_i) + m +1 - i.
\]
\end{lemma}
\begin{pf}
For each\vspace*{1pt} subgraph $T_i$, let $u_1^{(i)},\ldots,u_{|V_i|}^{(i)}$ be a
sequence on vertices which achieves the cut-width $\mathcal{E}(T_i)$.
Concatenate these sequences as
\[
\rho,u_1^{(1)},\ldots,u_{|V_1|}^{(1)},u_1^{(2)},
\ldots, u_{|V_k|}^{(k)},
\]
which can easily be seen to achieve the bound $\max_i \mathcal
{E}(T_i) + k + 1 - i$.
\end{pf}

For a collection of random variables $Y_1,\ldots,Y_k$, the order
statistics is defined as the permutation of the values into increasing
order such that $Y_{(1)} \leq\cdots\leq Y_{(k)}$.
%
\begin{lemma}\label{lpoissonIdentity}
Let $X \sim\operatorname{Po}(d)$, and let $Y_1,\ldots,Y_{X}$ be an i.i.d.
sequence distributed as $\operatorname{Po}(d)$. There exists $C(d)$ such that
\[
W=X+\max_{1\leq i \leq X} Y_{(i)} - i
\]
is stochastically dominated by $C+ \operatorname{Po}(d)$.
\end{lemma}
\begin{pf}
The probability distribution of the Poisson is given by
$P(\operatorname{Po}(d) =w) = \frac{d^w e^{-d}}{w!}$
which decays faster than any exponential, so
\[
\frac{P(\operatorname{Po}(d) \geq w) }{P(\operatorname{Po}(d) =w) }\rightarrow1
\]
as $w\rightarrow\infty$. With this fast rate of decay, we can choose
$C=C(d)$ large enough so that the following hold:
\begin{itemize}
\item$C \geq6$ is even, and for $w\geq\Cdu $,
%
\begin{equation}
\label{eGWExposureAssum0} P\bigl(\operatorname{Po}(d)\geq w +1\bigr)\leq P\bigl(
\operatorname{Po}(d)= w\bigr);
\end{equation}
\item for all $w\geq0$,
%
\begin{equation}
\label{eGWExposureAssum1} \biggl(w + \Cdu \biggr) E2^X P\biggl(
\operatorname{Po}(d)\geq w + \Cdu \biggr)\leq\frac 1{100} P\bigl(
\operatorname{Po}(d)\geq w\bigr);
\end{equation}
\item for all $w\geq0$,
%
\begin{equation}
\label{eGWExposureAssum2}
P\biggl(\operatorname{Po}(d)\geq\biggl\lfloor\frac{w}{2}\biggr\rfloor +
C \biggr)^{3} \leq P\biggl(\operatorname{Po}(d)\geq w + \Cdu \biggr),
\end{equation}
which can be achieved since $\frac1{((\lfloor{w}/{2}\rfloor+
C)!)^3} \ll \frac1{(w + {C}/{2} )!}$;
%
\item for all $w\geq2$,
%
\begin{equation}
\label{eGWExposureAssum4} \biggl(w + \frac{C}{2} \biggr)^2 2^{2w+{3C}/{2}} P \biggl(
\operatorname{Po}(d)\geq\frac{C}{2} \biggr)^{\lfloor{w}/{2}\rfloor}\leq\frac1{100};\vadjust{\goodbreak}
\end{equation}
\item for $w\in\{0,1\}$,
%
\begin{equation}
\label{eGWExposureAssum5} P(W \geq w+C) \leq P \bigl(\operatorname{Po}(d)\geq w
\bigr).
\end{equation}
\end{itemize}
Observe that for $1\leq i \leq x$,
%
\begin{eqnarray}
\label{eGWExposureBinomial} P( Y_{(i)} \geq w\mid X=x)
&\leq& \pmatrix{x
\cr
x-i+1}P\bigl(\operatorname{Po}(d) \geq w\bigr)^{x-i+1}
\nonumber\\[-8pt]\\[-8pt]
&\leq& 2^x P\bigl(\operatorname{Po}(d)\geq w\bigr)^{x-i+1}\nonumber
\end{eqnarray}
since if $Y_{(i)} \geq w$ then there are at least $x-i+1$ of the $Y$'s
must be greater than or equal to $w$ and there are $ {x \choose x-i+1}$
such choices of the set. For any $y,z\geq0$, we have that
%
\begin{eqnarray}
\label{eGWExposureExpSum} P\bigl(\operatorname{Po}(d) = y\bigr)P\bigl(
\operatorname{Po}(d) = z\bigr) &=& \frac{d^y e^{-d}}{y!} \frac
{d^z e^{-d}}{z!}
\nonumber
\\
&=&\pmatrix{y+z
\cr
z} \frac{d^{y+z} e^{-2d}}{(y+z)!} \\
&\leq&2^{y+z}P\bigl(
\operatorname{Po}(d) = y+z\bigr)\nonumber
\end{eqnarray}
since ${y+z\choose z} \leq2^{y+z}$.

Fix a $w \geq2$. Then
%
\begin{eqnarray}
\label{eGWExposureA}
&&
P(W \geq w+C)
\nonumber
\\
&&\qquad= P\Bigl(X+\max_{1\leq i \leq X} Y_{(i)} - i \geq w+C\Bigr)
\nonumber
\\
&&\qquad\leq P\biggl(X>w+\frac{C}{2} \biggr) \nonumber\\[-8pt]\\[-8pt]
&&\qquad\quad{}+ \sum_{x=1}^{w+{C}/{2} } P
\Bigl(x+\max_{1\leq i \leq x} Y_{(i)} - i \geq w+C\mid X=x\Bigr)P(X=x)
\nonumber
\\
&&\qquad\leq\frac1{100}P(X=w) \nonumber\\
&&\qquad\quad{}+ \sum_{x=1}^{w+{C}/{2} } P
\Bigl(x+\max_{1\leq i \leq
x} Y_{(i)} - i \geq w+C\mid X=x\Bigr)P(X=x),\nonumber
\end{eqnarray}
where the final equality follows from equation (\ref
{eGWExposureAssum1}). Now
%
\begin{eqnarray}
\label{eGWExposureB}
&&\sum_{x=1}^{w+{C}/{2} } P
\Bigl(x+\max_{1\leq i \leq x} Y_{(i)} - i \geq w+C\mid X=x\Bigr)P(X=x)
\nonumber
\\
&&\qquad \leq\sum_{x=1}^{w+{C}/{2} } \sum
_{i=1}^x P(x+ Y_{(i)} - i \geq w+C\mid
X=x)P(X=x)
\nonumber
\\
&&\qquad = \sum_{x=1}^{w+{C}/{2} } \sum
_{j=1}^x P(Y_{(x-j+1)} \geq w -j + 1+C\mid
X=x)P(X=x)
\\
&&\qquad \leq\sum_{x=1}^{w+{C}/{2} } \sum
_{j=1}^x 2^x P\bigl(
\operatorname{Po}(d)\geq w-j+1+C\bigr)^{j} P(X=x)
\nonumber
\\
&&\qquad = \sum_{j=1}^{w+{C}/{2} } \sum
_{x=j}^{w+{C}/{2} } 2^x P\bigl(
\operatorname{Po}(d)\geq w-j+1+C\bigr)^{j} P(X=x),\nonumber
\end{eqnarray}
where line 3 follows by setting $j=x-i+1$, and line 4 follows from
equation~(\ref{eGWExposureBinomial}). We split this sum into 3 parts.
First we have that
%
\begin{eqnarray}
\label{eGWExposureC}
&&
\sum_{j=1}^{{C}/{2} }
\sum_{x=j}^{w+{C}/{2} } 2^x P\bigl(
\operatorname{Po}(d)\geq w-j+1+C\bigr)^{j} P(X=x)
\nonumber
\\
&&\qquad
\leq\frac{C}{2} \sum_{x=1}^{w+{C}/{2} } 2^x
P\biggl(\operatorname{Po}(d)\geq w + \frac{C}{2}\biggr) P(X=x)
\\
&&\qquad
\leq\frac{C}{2} E2^X P\biggl(\operatorname{Po}(d)\geq w + \frac{C}{2} \biggr)
\nonumber
\\
&&\qquad
\leq\frac1{100} P\bigl(\operatorname{Po}(d)\geq w\bigr),\nonumber
\end{eqnarray}
where the final equality follows from equation (\ref
{eGWExposureAssum1}). Second,
%
\begin{eqnarray}
\label{eGWExposureD}
&& \sum_{j={C}/{2} +1}^{\lfloor{w}/{2}\rfloor }
\sum_{x=j}^{w+{C}/{2} } 2^x P\bigl(
\operatorname{Po}(d)\geq w-j+1+C\bigr)^{j} P(X=x)
\nonumber
\\
&&\qquad \leq\biggl\lfloor\frac{w}{2}\biggr\rfloor
\sum_{x={C}/{2} +1}^{w+{C}/{2} } 2^x
P\biggl(\operatorname{Po}(d)\geq\biggl\lfloor\frac{w}{2}\biggr\rfloor + C\biggr)^{{C}/{2} } P(X=x)
\nonumber
\\
&&\qquad \leq\biggl\lfloor\frac{w}{2}\biggr\rfloor E2^X P\biggl(\operatorname{Po}(d)\geq\biggl\lfloor\frac{w}{2}\biggr\rfloor + C
\biggr)^{{C}/{2} }
\\
&&\qquad \leq\biggl\lfloor\frac{w}{2}\biggr\rfloor E2^X P\biggl(\operatorname{Po}(d)\geq w + \frac{C}{2} \biggr)
\nonumber
\\
&&\qquad \leq\frac1{100} P\bigl(\operatorname{Po}(d)\geq
w\bigr),\nonumber
\end{eqnarray}
where line 4 follows from the fact that $\frac{C}{2} \geq3$ and equation
(\ref{eGWExposureAssum2}), and line 5 follows from equation (\ref
{eGWExposureAssum1}). Finally,
%
\begin{eqnarray}
&& \sum_{j=\lfloor{w}/{2}\rfloor +1}^{w+{C}/{2} } \sum
_{x=j}^{w+{C}/{2} } 2^x P\bigl(
\operatorname{Po}(d)\geq w-j+1+C\bigr)^{j} P(X=x)
\nonumber
\\
&&\qquad \leq\sum_{j=\lfloor{w}/{2}\rfloor +1}^{w+{C}/{2} } \sum
_{x=j}^{w+{C}/{2} } 2^{w+{C}/{2} } P\bigl(
\operatorname{Po}(d)\geq w-j+1+C\bigr)
^{\lfloor{w}/{2}\rfloor +1} \nonumber\\
&&\qquad\quad\hspace*{70.2pt}{}\times P\bigl(\operatorname{Po}(d)
=x\bigr)
\nonumber\\[-8pt]\\[-8pt]
&&\qquad \leq\sum_{j=\lfloor{w}/{2}\rfloor +1}^{w+{C}/{2} } \sum
_{x=j}^{w+{C}/{2} } 2^{w+{C}/{2} } P\biggl(
\operatorname{Po}(d)\geq\frac{C}{2} \biggr)^{\lfloor{w}/{2}\rfloor }
\nonumber
\\
&&\hspace*{70.2pt}\qquad\quad{} \times P\bigl(\operatorname{Po}(d) = w-x+C\bigr)
\nonumber\\
&&\hspace*{70.2pt}\qquad\quad{} \times  P\bigl(\operatorname{Po}(d) =x
\bigr),\nonumber
\end{eqnarray}
where the second line follows since $x\leq w+\frac{C}{2} $ and $j
\geq\lfloor\frac{w}{2}\rfloor +1$, and the third line follows from the
fact that $w-j+1+C$ is greater than both $\Cdu $ and $w-x+C+1$, and
applying equation (\ref{eGWExposureAssum0}) which
says that $P(\operatorname{Po}(d) = w-x+C) \geq
P(\operatorname{Po}(d) \geq w-x+C +1)$. Then
%
\begin{eqnarray}
\label{eGWExposureE}
&&
\sum_{j=\lfloor{w}/{2}\rfloor +1}^{w+{C}/{2} }
\sum_{x=j}^{w+{C}/{2} } 2^{w+{C}/{2} } P\biggl(
\operatorname{Po}(d)\geq\frac{C}{2} \biggr)^{\lfloor{w}/{2}\rfloor } P\bigl(\operatorname{Po}(d) =
w-x+C\bigr) \nonumber\hspace*{-15pt}\\
&&\hspace*{70.7pt}{}\times P\bigl(\operatorname{Po}(d) =x\bigr)
\nonumber\hspace*{-15pt}
\\
&&\qquad \leq\sum_{j=\lfloor{w}/{2}\rfloor +1}^{w+{C}/{2} } \sum
_{x=j}^{w+{C}/{2} } 2^{w+{C}/{2} } P\biggl(
\operatorname{Po}(d)\geq\frac{C}{2} \biggr)^{\lfloor{w}/{2}\rfloor } 2^{w+C} P\bigl(
\operatorname{Po}(d) =w+C\bigr)\hspace*{-15pt}
\\
&&\qquad \leq \biggl(w + \frac{C}{2} \biggr)^2 2^{2w+{3C}/{2}} P\biggl( \operatorname{Po}(d)
\geq\frac{C}{2} \biggr)^{\lfloor{w}/{2}\rfloor } P\bigl(\operatorname{Po}(d) =w+C\bigr)
\nonumber\hspace*{-15pt}
\\
&&\qquad \leq\frac1{100} P\bigl(\operatorname{Po}(d)\geq
w\bigr),\nonumber\hspace*{-15pt}
\end{eqnarray}
where the second line follows from equation (\ref
{eGWExposureExpSum}), and the final line follows from equation (\ref
{eGWExposureAssum4}). Combining equations (\ref{eGWExposureA})
through (\ref{eGWExposureE}), we have that for $w\geq2$,
\[
P(W \geq w+C) \leq\tfrac1{25}P \bigl(\operatorname{Po}(d)\geq w \bigr) \leq P
\bigl(\operatorname{Po}(d)\geq w \bigr).
\]
Combining this with equation (\ref{eGWExposureAssum5}) completes
the proof.
\end{pf}

We now prove Lemma~\ref{lcutwidthGW}.

\begin{pf*}{Proof of Lemma~\ref{lcutwidthGW}}
Take $C'=C+1$ where $C$ is the constant from Lem\-ma~\ref
{lpoissonIdentity}. We prove the result by induction on $\ell$. When
$\ell=0$ a 0 level Galton--Watson branching process tree is just a
single vertex which has cut-width 0, so the statement is trivially
satisfied. When $\ell\geq1$, the subtrees attached to the root are
independent $\ell-1$ level Galton--Watson branching process trees, so
by the inductive hypothesis, Lemmas~\ref{lcutwidthTreeClaim} and
\ref{lpoissonIdentity}, we have that $\mathcal{E}(T)$ is
stochastically dominated by the distribution $C'\ell+ \operatorname{Po}(d)$.
\end{pf*}

\section{Spatial mixing}



\subsection{SAW trees}

Weitz \cite{Weitz06} developed the tree of self-avoiding walks
construction, which enables the calculation of marginal distributions
of a Gibbs measure on a graph by calculating marginal distributions on
a specially constructed tree. This construction, along with the
censoring inequality, will be a major tool in our proof. For a graph
$G$ and a vertex $v$, we denote the tree of self-avoiding paths from
$V$ in $G$ as $T_{\mathrm{saw}}(G,v)$. This is the tree of paths in
$G$ starting from $v$ and not intersecting
themselves, except possibly at the terminal vertex of the path. Through this
construction each vertex in $T_{\mathrm{saw}}(G,v)$ can be mapped
to a vertex in $G$. This gives a natural way to relate
a subset $\Lambda\subset V$ as the pullback of this map which denote
$\varphi(\Lambda)\subset T_{\mathrm{saw}}(G,v)$. We extend this to relating
configurations $\eta_\Lambda$ to the
corresponding configurations $\eta_{\varphi(\Lambda)}$ on $\varphi
(\Lambda)$. Furthermore if $A,B\subset V$ then
$d(A,B)=d(\varphi(A), \varphi(B))$. Each vertex (edge) of $T_{\mathrm{saw}}$
maps to a
vertex (edge) in $G$ so $P_{T_{\mathrm{saw}}}$ is defined by taking the
corresponding external field and interactions. Then Theorem 3.1 of
\cite{Weitz06} gives the
following result.

\begin{lemma}[(Weitz \cite{Weitz06})]\label{lemWeitz}
For a graph $G$ and $v\in G$, there exists $A\subset T_{\mathrm{saw}}$ and
a configuration $\nu_A$ on $A$ such that for any $\Lambda\subset V$
and configuration $\eta_\Lambda$ on $\Lambda$, such that
\[
P_G(\sigma_v=+\mid\sigma_\Lambda)=P_{T_{\mathrm{saw}}}(
\sigma_v=+\mid\sigma_{\varphi(\Lambda)\setminus A}=\eta_{\varphi(\Lambda)\setminus
A},
\sigma_A=\nu_{A}).
\]
The set $A$ is the set of leaves in $T_{\mathrm{saw}}$ corresponding to the
terminal vertices of paths which
return to a vertex already visited by the path. The construction of
$\nu_A$ is described in \cite{Weitz06}.
\end{lemma}

\subsection{Spatial correlations on trees}

We consider the effect that conditioning the vertices of a tree has on
the marginal distribution of the spin at the root. It will be
convenient to compare this probability to the Ising model with the same
interaction strengths $\beta_{uv}$ but no external field ($h\equiv0$)
which we will denote $\tilde{P}$.

\begin{lemma}\label{lemtreeboundaryeffect}
Suppose that $T$ is a tree, $P$ is the Ising model with arbitrary
external field
(including $h_u = \pm\infty$ meaning that $\sigma_u$ is set to $\pm
$) and $0\leq\beta_{u,v} \leq\beta$ for all $(u,v)\in E$. Let
$U\subseteq\Lambda\subset V$, and let $\eta^+,\eta^-$ be two
configurations on $\Lambda$ which differ only on $U$ with $\eta_U^+\equiv+,\eta_U^-\equiv-$.
Then for all $v\in V$,
\[
0\leq P\bigl(\sigma_v=+\mid\sigma_{\Lambda}=\eta^+\bigr)- P
\bigl(\sigma_v=+\mid\sigma_{\Lambda}=\eta^-\bigr) \leq\sum
_{u\in U} (\tanh \beta)^{d(u,v)}.
\]
\end{lemma}

\begin{pf}
The inequality
\[
0\leq P\bigl(\sigma_v=+\mid\sigma_{\Lambda}=\eta^+\bigr)- P
\bigl(\sigma_v=+\mid\sigma_{\Lambda}=\eta^-\bigr)
\]
simply follows from the monotonicity of the ferromagnetic Ising model.
Now suppose that the set $U$ is a single vertex $u$.
Lemma 4.1
of \cite{BeKeMoPe05} implies that for any vertices $v,u\in T$,
%
\begin{eqnarray}
\label{enoFieldEffect}
&&
P(\sigma_v=+\mid\sigma_{u}=+)- P(
\sigma_v=+\mid\sigma_{u}=-)
\nonumber\\[-8pt]\\[-8pt]
&&\qquad \leq\tilde{P}(\sigma_v=+\mid\sigma_{u}=+)-
\tilde{P}(\sigma_v=+\mid\sigma_{u}=-).\nonumber
\end{eqnarray}
If $u_0,u_1,\ldots,u_l$ are a path of vertices in $T$, then a simple
calculation yields that
%
\begin{eqnarray}
\label{epathDecay}
\tilde{P}(\sigma_{u_k}=+\mid\sigma_{u_0}=+)-
\tilde{P}(\sigma_{u_k}=+\mid\sigma_{u_0}=-)
&=& \prod
_{i=1}^k \tanh \beta_{u_{i-1} u_i} \nonumber\\[-8pt]\\[-8pt]
&\leq&(\tanh \beta)^k.\nonumber
\end{eqnarray}
Conditioning is equivalent to setting an infinite external field, so
equations (\ref{enoFieldEffect}) and (\ref{epathDecay}) imply that
%
\begin{equation}
\label{esingleSpinFlip} P\bigl(\sigma_v=+\mid\sigma_{\Lambda}=
\eta^+\bigr)- P\bigl(\sigma_v=+\mid \sigma_{\Lambda}=\eta^-\bigr)
\leq(\tanh\beta)^{d(u,v)}.
\end{equation}
We now consider a general $U$. Let $u_1,\ldots,u_{|U|}$ be an
arbitrary labeling of the vertices of $U$. Take a sequence of
configurations $\eta^0, \eta^1,\ldots, \eta^{|U|}$ on $\Lambda$
with $\eta^0=\eta^-$ and $ \eta^{|U|} =\eta^+$ where consecutive
configurations $\eta^{i-1}$ and $\eta^i$ differ only at $u_i$ with
$\eta^i_{u_i}=+$ and $\eta^{i-1}_{u_i}=-$. By equation (\ref
{esingleSpinFlip}) we have that
\[
P\bigl(\sigma_v=+\mid\sigma_{\Lambda}= \eta^{i+1}
\bigr)- P\bigl(\sigma_v=+\mid\sigma_{\lambda}= \eta^i
\bigr) \leq(\tanh \beta)^{d(v,u_i)}
\]
and so
\[
P\bigl(\sigma_v=+\mid\sigma_{\Lambda}=\eta^+\bigr)- P\bigl(
\sigma_v=+\mid\sigma_{\Lambda}=\eta^-\bigr) \leq\sum
_{u\in U} (\tanh \beta)^{d(u,v)},
\]
which completes the proof.
\end{pf}

\subsection{Continuous time to discrete time}

\begin{lemma}\label{lctsToDiscrete}
Suppose that in continuous time starting from the all $+$ and all $-$
configurations the Gibbs sampler under the monotone coupling couples
with probability at least $\frac78$ by time $T\geq1$. Then the Gibbs
sampler in discrete time under the monotone coupling couples with
probability at least $1-\frac1{2e}$ by time $\lceil5Tn \rceil$ and
hence has mixing time at most $\lceil5Tn\rceil$.
\end{lemma}
\begin{pf}
Let $M$ denote the number of updates of the continuous dynamics up to
time $T$. Then $M$ is distributed as a Poisson random variable with
mean $Tn$. For some integer $m$, the final state of the continuous time
Gibbs sampler conditioned on $M=m$ is the same as the final state of
the discrete Gibbs sampler with $m$ steps. So the probability of
coupling in the discrete time after $m$ steps is at least $\frac78 -
P(\operatorname{Po}(Tn)>m)$. So if $m\geq5Tn$, then by Markov's theorem,
\[
P\bigl(\operatorname{Po}(Tn)>m\bigr) \leq\frac{Ee^{
\operatorname{Po}(Tn)}}{e^{5Tn}}=e^{Tn(e-1)-5Tn}
\leq e^{-3}.
\]
Since $\frac78 -e^{-3} > 1 - \frac1{2e}$, the discrete chain couples
by time $5Tn$ with probability at least $1-\frac1{2e}$. Hence the
mixing time is at most $\lceil5Tn\rceil$.
\end{pf}

\subsection{\texorpdfstring{Proof of Lemma \protect\ref{lemmainSM}}{Proof of Lemma 3}}

We now prove Lemma~\ref{lemmainSM} by applying Lemmas~\ref{lemWeitz}
and~\ref{lemtreeboundaryeffect} to a small graph centered at $v$.


\begin{pf*}{Proof of Lemma~\ref{lemmainSM}}
Let $T$ denote the tree of self avoiding walks on $G$ from~$v$,
$T_{\mathrm{saw}}(G,v)$. Let $\varphi(S(v,R))$ denote the vertices in $T$ which
correspond to vertices in $S(v,R)$, and for each $u\in S(v,R)$ let
$\varphi(u)$ denote the set of vertices in $T$ which correspond to
$u$. Then by Lemmas~\ref{lemWeitz} and~\ref{lemtreeboundaryeffect},
%
\begin{eqnarray}
\label{eauBound} a_u&=& \sup_{\eta^+,\eta^-}
P_{T_{\mathrm{saw}}}\bigl(\sigma_v=+\mid\sigma_{\varphi
(\Lambda)\setminus A}=
\eta^+_{\phi(\Lambda)\setminus A},\sigma_A=\nu_{A}\bigr)
\nonumber
\\
&&\hspace*{22pt}{} - P_{T_{\mathrm{saw}}}\bigl(\sigma_v=+\mid\sigma_{\varphi(\Lambda
)\setminus A}=
\eta^-_{\phi(\Lambda)\setminus A},\sigma_A=\nu_{A}\bigr)
\\
& \leq &\sum_{w\in\varphi(u) } \tanh^{d(v,w)} \beta.\nonumber
\end{eqnarray}
Applying this bound,
\begin{eqnarray*}
\sum_{u\in S(v,R)} a_u &\leq& \sum
_{u\in S(v,R)} \sum_{w\in
\varphi(u) }
\tanh^{d(v,w)} \beta
\\
&=& \sum_{w \in\varphi(S(v,R))} \tanh^{d(v,w)} \beta
\\
&\leq& \sum_{w \in T\dvtx d(w,v)\geq R} \tanh^{d(v,w)} \beta,
\end{eqnarray*}
where the final inequality follows from the fact that $d(v,\varphi
(S(v,R)))\geq m$. Now since $T$ has maximum degree $d$ for each $\ell
$, there are at most $d(d-1)^{\ell-1}$ vertices at distance $\ell$
from $v$. It follows that
\begin{eqnarray*}
\sum_{u\in S(v,R)} a_u &\leq& \sum
_{w \in T\dvtx d(w,v)\geq R} \tanh^{d(v,w)} \beta
\\
&\leq& \sum_{\ell=R}^\infty d(d-1)^{\ell-1}
\tanh^{\ell} \beta
\\
&=& \frac{d(d-1)^{R-1} \tanh^{R} \beta}{1-(d-1)\tanh\beta}
\\
&\leq& \frac14
\end{eqnarray*}
as required.
\end{pf*}



\subsection{\texorpdfstring{Proof of Lemma \protect\ref{lemERSM}}{Proof of Lemma 6}}

We now prove Lemma~\ref{lemERSM}.



\begin{pf*}{Proof of Lemma~\ref{lemERSM}}
We need to establish the spatial mixing condition. Recall that
\[
a_u=\sup_{\eta^+,\eta^-} P\bigl(\sigma_v=+\mid
\sigma_{\Lambda}=\eta^+\bigr)- P\bigl(\sigma_v=+\mid
\sigma_{\Lambda}=\eta^-\bigr)
\]
and by equation (\ref{eauBound}),
\[
a_u \leq\sum_{w\in\varphi(u) }
\tanh^{d(v,w)} \beta.
\]
Now $t(v,R)\leq1$ with high probability for all $v\in V$ by Lemma \ref
{lERGraphProperties}, so $B(v,R)$ is a tree or unicyclic. Hence every
$u\in S(v,R)$ appears, at most, twice in the tree of self-avoiding
walks, which gives $|\varphi(u)|\leq2$ and $d(v,\varphi(u))=R$. Thus
for all $v\in V$ with high probability,
\begin{eqnarray*}
\sum_{u\in S(v,R)} a_u &\leq& \sum
_{u\in S(v,R)} \sum_{w\in\varphi
(u) }
\tanh^{d(v,w)} \beta
\\
&\leq& 2\mathfrak{X} \tanh^{R} \beta
\\
&=& 6\bigl(1-d^{-1}\bigr)^{-1}(d\tanh\beta)^{R}
\log n
\\
&=& o(1),
\end{eqnarray*}
which establishes the spatial mixing condition.
%
\end{pf*}

\section{Infinite graphs}\label{sinfinite}

Up to this point,we have only dealt with finite graphs; however, the
Ising model and the Glauber dynamics can be defined on infinite graphs
as well; see, for example, \cite{Liggett85}. The spatial mixing
property of uniqueness says that there is a unique Gibbs measure for
the interacting particle system; one formulation of this is that for\vadjust{\goodbreak}
every finite set $A\subset V$, we have that
\[
\limsup_{R\rightarrow\infty} \sup_{\eta,\eta'} \bigl\| P (\sigma_{A} =\cdot
\mid\sigma_{S(A,R)}=\eta ) - P \bigl(\sigma_{A} =\cdot\mid
\sigma_{S(A,R)}=\eta' \bigr) \bigr\|_{\mathrm{TV}} = 0,
\]
where $S(A,R)=\{u\in V\dvtx d(u,A)=R\}$, and $\eta,\eta'$ are
configurations on $S(A,R)$. This says that the configuration on $A$ is
asymptotically independent of the spins a large distance away.
In the context of the ferromagnetic Ising model this is equivalent to
%
\begin{equation}
\label{euniquenessCriterion} P ( \sigma_v = + \mid
\sigma_{S(v,R)} \equiv+ ) - P ( \sigma_v = + \mid
\sigma_{S(v,R)} \equiv- ) \longrightarrow0
\end{equation}
for all $v\in V$ as $R\rightarrow\infty$.
Combining Lemmas~\ref{lemWeitz} and~\ref{lemtreeboundaryeffect}
it follows that condition (\ref{emainCondition}) implies uniqueness.
This was also noted in \cite{ZhaLiaBai11}.

The following lemma shows that given uniqueness the Glauber dynamics on
an infinite graph can locally be approximated by the Glauber dynamics
of the Ising model on finite graphs. For a fixed finite set $U\subset
$, let $\sigma^{*\ell}$ denote a random configuration according to
the stationary distribution of the Ising model on the induced subgraph
$G_\ell$ whose vertex set is given by $U_\ell:=\{u\in V\dvtx d(u,U)\leq
\ell\}$. Let $\sigma^{*\ell}(t)$ denote the Glauber dynamics of this
Ising model started from the stationary distribution.

\begin{lemma}\label{llocalJointApprox}
Let $G$ be an infinite graph with maximum degree $d$, and suppose for
some $\{ \beta_{(u,v)} \}$ and $\{ h_u \}$ that the Ising model has
the uniqueness property, and let $U$ be a finite subset of $V$. With
$\sigma^{*\ell}_U(t)$ defined as above,
\[
\bigl(\sigma^{*\ell}_U(0),\sigma^{*\ell}_U(1)
\bigr) \rightarrow \bigl(\sigma_U(0),\sigma_U(1)
\bigr)
\]
jointly in distribution as $\ell\rightarrow\infty$.
\end{lemma}
\begin{pf}
Fix an $\varepsilon>0$. It is sufficient to show that for some $\ell'$
we can couple $ (\sigma^{*\ell}_U(0),\sigma^{*\ell}_U(1)
)$ and $  (\sigma_U(0),\sigma_U(1) )$ with probability at
least $1-\varepsilon$ when $\ell>\ell'$.
Fix some positive integer $m$ large enough so that
\[
P\bigl(\operatorname{Poisson}(1) \geq m\bigr)<\tfrac12\varepsilon d^{-m}
|U|^{-1}.
\]
By the uniqueness property as $\ell\rightarrow\infty$, we have that
$\sigma^{*\ell}_{U_m}$ converges in distribution to $\sigma_{U_m}$.
So for some $\ell'$ when $\ell>\ell'$, we can couple initial
configurations $\sigma^{*\ell}(0)$ and $\sigma(0)$ so that $\sigma^{*\ell}_{U_m}(0)$ and $\sigma_{U_m}(0)$ agree with probability at
least $1-\varepsilon/2$. Now couple the Glauber dynamics by using the
same sequence of updates for each chain within $U_\ell$.

We now bound the probability that there is disagreement between $\sigma^{*\ell}_{U}(1)$ and $\sigma_{U}(1)$, given that $\sigma^{*\ell
}_{U_m}(0)$ and $\sigma_{U_m}(0)$ agree. We will call a sequence
$u_1,\ldots, u_k$ of vertices a \textit{path} if $u_i$ and $u_{i+1}$
are adjacent for each $i$. An update can only create a disagreement at
the vertex if a neighboring vertex already has a disagreement. Hence a
vertex $u$ can only have a disagreement by time $t$ if there is a path
of vertices from $u_1, \ldots,u_k=u$ such that the vertices in the
path are updated by the Glauber dynamics in that order before time 1
and $u_1 \in U_m\setminus U_{m-1}$.

Hence the event $\sigma^{*\ell}_{U}(1)\neq\sigma_{U}(1)$ is
dominated by the event that there is a path of updates of vertices
$u_1,\ldots,u_m$, updated in that order before time $1$ with $u_m\in
U$. For each fixed path the probability that those vertices are updated
in that order is $P(\operatorname{Poisson}(1) \geq m)$. There are at most $d^m
|U|$ such paths of vertices, so by a union bound and our choice of $m$,
the probability of a disagreement reaching $|U|$ is at most $\varepsilon
/2$. It follows that we can couple $ (\sigma^{*\ell}_U(0),\sigma^{*\ell}_U(1) )$ and $  (\sigma_U(0),\sigma_U(1) )$
with probability at least $1-\varepsilon$, which completes the proof.
\end{pf}

We now show how the spectral gap bounds for the finite graph dynamics
imply spectral gap bounds for infinite graph dynamics. The following
lemma completes Theorem~\ref{tmain}.

\begin{lemma}
Let $G$ be a infinite graph with maximum degree $d$, and suppose for
some $\{ \beta_{(u,v)} \}$ and $\{ h_u \}$ the Ising model has the
uniqueness property. Further suppose that
for every finite subgraph $G'$ of $G$, the Ising model on $G'$ has
continuous time spectral gap bounded below by $\lambda^*$. Then the
infinite volume dynamics has spectral gap bounded below by $\lambda^*$.
\end{lemma}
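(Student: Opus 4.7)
The plan is to apply the variational characterization
\[
\mathrm{Gap}(\mu) \;=\; \inf_{f} \frac{\mathcal{E}_\mu(f,f)}{\Var_\mu(f)},
\]
where $\mu$ is the (unique) infinite-volume Gibbs measure, $\mathcal{E}_\mu$ is the Dirichlet form of the continuous-time Glauber dynamics on $G$, and the infimum runs over nonconstant local functions. Because local functions are dense in $L^2(\mu)$ and both $\Var_\mu$ and $\mathcal{E}_\mu$ are continuous under this approximation, it suffices to verify the Poincar\'e inequality $\lambda^* \Var_\mu(f) \le \mathcal{E}_\mu(f,f)$ for every $f$ depending on only finitely many coordinates.

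Given such an $f$ supported on a finite set $U\subset V$, I would first rewrite both sides as expectations of local functions. The Dirichlet form decomposes as
\[
\mathcal{E}_\mu(f,f) \;=\; \sum_{v\in V} \E_\mu\bigl[\Var_\mu(f\mid \sigma_{V\setminus v})\bigr] \;=\; \sum_{v\in U} \E_\mu\bigl[\Var_\mu(f\mid \sigma_{V\setminus v})\bigr],
\]
the second equality holding because $f$ does not depend on $\sigma_v$ for $v\notin U$. For $v\in U$ the heat-bath kernel at $v$ depends only on $\sigma_{N(v)}$, so each summand is a bounded local function of $\sigma_{U\cup N(U)}$. Thus both $\Var_\mu(f)$ and $\mathcal{E}_\mu(f,f)$ are of the form $\int g\, d\mu$ for fixed bounded functions $g$ supported on the finite set $U\cup N(U)$.

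I would then pass to the limit through finite truncations. Let $G_\ell$ be the induced subgraph on $U_\ell = \{u\in V : d(u,U)\le \ell\}$ and let $\mu_\ell$ be the Ising measure on $G_\ell$ with its induced interactions and external fields (the boundary condition is immaterial). For every $\ell$ large enough that $U\cup N(U)\subset G_\ell$, the local integrands identified above are the exact same functions of $\sigma_{U\cup N(U)}$ whether computed under the finite-volume specification on $G_\ell$ or the infinite-volume specification on $G$, since the heat-bath kernel at $v\in U$ sees only $N(v)\subset G_\ell$. By the uniqueness hypothesis, $\mu_\ell\to\mu$ weakly on every fixed finite subset of $V$ (equivalently, this is the time-zero content of Lemma~\ref{l:localJointApprox} applied to the finite set $U\cup N(U)$), so $\Var_{\mu_\ell}(f)\to \Var_\mu(f)$ and $\mathcal{E}_{\mu_\ell}(f,f)\to \mathcal{E}_\mu(f,f)$. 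Applying the finite-volume Poincar\'e inequality $\lambda^*\Var_{\mu_\ell}(f)\le \mathcal{E}_{\mu_\ell}(f,f)$ from the hypothesis on each $G_\ell$ and letting $\ell\to\infty$ yields the desired inequality $\lambda^*\Var_\mu(f)\le \mathcal{E}_\mu(f,f)$. The only non-bookkeeping ingredient is the weak convergence $\mu_\ell\to\mu$, which is exactly uniqueness, so no serious obstacle is expected.
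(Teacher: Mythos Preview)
Your argument is correct but follows a genuinely different route from the paper. The paper works with the characterization
\[
\mathrm{Gap} = -\log \sup_f \frac{\Cov\bigl(f(\sigma(0)),f(\sigma(1))\bigr)}{\Var f(\sigma(0))},
\]
reduces to local $f$ via the $L^2$ martingale convergence theorem, and then invokes Lemma~\ref{l:localJointApprox} to obtain the \emph{joint} distributional convergence of $\bigl(\sigma^{*\ell}_U(0),\sigma^{*\ell}_U(1)\bigr)$ to $\bigl(\sigma_U(0),\sigma_U(1)\bigr)$; that lemma requires a separate finite-speed-of-propagation argument for the dynamics. By contrast, your Dirichlet-form approach exploits the locality of the heat-bath kernel to write $\mathcal{E}_\mu(f,f)$ itself as the $\mu$-expectation of a bounded function of finitely many spins, so only the \emph{static} convergence $\mu_\ell\to\mu$ is needed, and this is precisely uniqueness (together with FKG sandwiching between the $+$ and $-$ boundary measures to handle the free-boundary $\mu_\ell$). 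Your route is therefore somewhat more direct, bypassing Lemma~\ref{l:localJointApprox} entirely; the price is that one must know that cylinder functions form a core for the Glauber Dirichlet form (so that the infimum over local $f$ gives the gap), which is standard but should be cited. The paper's route has the mild advantage that the time-$1$ covariance formulation makes the role of the dynamics more explicit and yields the intermediate Lemma~\ref{l:localJointApprox}, which is of some independent interest.
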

\begin{pf}
First we may assume that the graph is connected since the spectral gap
is the minimum of the spectral gaps of the dynamics projected onto
individual components. We will use the characterization of the spectral
gap that
\[
\mathrm{Gap} = -\mathop{\log\sup}_f \frac{\Cov(  ( f(\sigma
(0)),f(\sigma(1)) )}{\Var f(\sigma(0))},
\]
where the supremum is over all square integrable functions $f\dvtx \{+,-\}^V\to\mathbb{R}$ with $E f =0$. Fix a vertex $v$, and for such a
function $f$, we define the bounded function $f_R\dvtx\{+,-\}^{B(v,R)}\to
\mathbb{R}$ by
\[
f_R(\sigma) = E \bigl( f(\sigma) \mid\sigma_{B(v,R)} \bigr).
\]
Since every vertex is ultimately in $B(v,R)$ for $R$ sufficiently
large, by the $L^2$ martingale convergence theorem, $f_R(\sigma)$
converges to $f(\sigma)$ in $L^2$, and so
%
\begin{equation}
\label{elocalApproximation} \lim_{R\to\infty} \frac{\Cov(  ( f_R(\sigma(0)),f_R(\sigma
(1)) )}{\Var f_R(\sigma(0))} =
\frac{\Cov(  ( f(\sigma
(0)),f(\sigma(1)) )}{\Var f(\sigma(0))}.
\end{equation}
In particular, this means that in the supremum, we only need consider
bounded functions which are determined by a finite number of spins. So
suppose that $g$ is such a bounded function depending only on $\sigma_U$ for some finite $U\subset V$.

By Lemma~\ref{llocalJointApprox} we have that $ (\sigma^{*\ell
}_U(0),\sigma^{*\ell}_U(1) )$ converges jointly in distribution
to $ (\sigma_U(0),\sigma_U(1) )$. Hence using our
assumption on the spectral gap on finite subgraphs, we have that
\[
\lambda^* \leq\lim_{\ell\to\infty} {-}\log\frac{\Cov(  (
g(\sigma^{*\ell}(0)),g(\sigma^{*\ell}(1)) )}{\Var g(\sigma^{*\ell}(0))} = -\log
\frac{\Cov(  ( g(\sigma(0)),g(\sigma
(1)) )}{\Var g(\sigma(0))},
\]
which establishes $\lambda^*$ as a lower bound on the spectral gap.
\end{pf}

\section{Conclusion}

The proof of Theorem~\ref{tgeneral} naturally extends to more general
monotone systems. Moreover, instead of censoring outside a ball of
radius $R$ about a vertex $v$, we could instead look at the general,
well-chosen sets $v\in W_v\subset V$. We let $S_v$ denote the boundary
set $\{u\in V\setminus W_v\dvtx d(u,W_v)=1\}$. We consider the following setup.
There is a spin set $\Omega$ which is ordered with a maximal element
$+$ and a minimal element $-$.
The order on $\Omega$ naturally extends to a partial order on $\Omega^V$ where $V$ is the vertex set of a graph by letting $\sigma_1 \leq
\sigma_2$ if and only if $\sigma_1(v) \leq\sigma_2(v)$ for all $v
\in V$. A measure
$P$ on $\Omega^V$ is called monotone if for all $v \in V$ and all $a
\in\Omega$,
\[
P\bigl[\sigma(v) \geq a \mid \sigma(w\dvtx w \neq v) = \sigma_1\bigr] \geq
P\bigl[\sigma (v) \geq a \mid \sigma(w\dvtx w \neq v) = \sigma_2\bigr],
\]
whenever $\sigma_1 \geq\sigma_2$. We may now state a generalization
of Theorem~\ref{tgeneral}.

\begin{maintheorem}\label{tveryGeneral}
Let $G$ be a graph on $n\geq2$ vertices, and let $P(\sigma)$ be any
monotone Gibbs measure on $G$.

Suppose that there exist constants $T,\mathfrak{X}\geq1$ and for each
$v\in V$ there is a subset $W_v\subset V$ containing $v$ such that the following
three conditions hold:
\begin{itemize}
\item\textup{Volume}: The volume of $W_v$ satisfies $|W_v|\leq
\mathfrak{X}$.

\item\textup{Local mixing}: For any configuration $\eta$ on $S_v$,
the continuous time mixing time of the Gibbs sampler on $W_v$ with
fixed boundary condition $\eta$ is bounded above by $T$.

\item\textup{Spatial mixing}: For each vertex $u\in S_v$, define
%
\begin{equation}
\label{eqSM1gen} a_u=\sup_{\eta^+,\eta^-} d_{\mathrm{TV}}
\bigl(P\bigl(\sigma_v=\cdot\mid\sigma_{\Lambda}=
\eta^1\bigr), P\bigl(\sigma_v=\cdot\mid
\sigma_{\Lambda}=\eta^2\bigr) \bigr),
\end{equation}
where the supremum is over configurations $\eta^1,\eta^2$ on $S_v$
which differ only at $u$. Then
%
\begin{equation}
\label{eqSMgen} \sum_{u\in S_v} a_u \leq
\frac14.
\end{equation}
\end{itemize}
Then starting from the all $+$ and all $-$ configurations in continuous
time, the monotone coupling couples with probability at least $\frac
78$ by time $T \lceil\log8\mathfrak{X} \rceil ( 3 + \log_2 n
)$.

It follows that the mixing time of the Gibbs sampler in
continuous time satisfies
\[
\tau_{\mathrm{mix}} \leq T \lceil\log8\mathfrak{X} \rceil ( 3 +
\log_2 n ).
\]
\end{maintheorem}

While Theorem~\ref{tveryGeneral} applies to general monotone systems,
the use of the censoring lemma of Peres and Winkler does not allow us
to extend it to nonmonotone systems such as random colorings. A major
open problem is how to relate spatial mixing to temporal mixing in
nonmonotone settings, for example, for the hardcore model, the
antiferromagnetic Ising model or the coloring model.

\subsection{Open problems}
We showed that condition (\ref{emainCondition}) establishes a uniform
lower bound on the spectral gap of the continuous time dynamics over
all graphs. It would be of interest to establish whether or not this is
also true for bounds on the Log-Sobolev constant as well.

As discussed in the \hyperref[intro]{Introduction}, our results give rise to the
following conjecture concerning nonmonotone systems.

\begin{conjecture}
The Gibbs sampler for the antiferromagnetic Ising model (with no
external field) is rapidly mixing on any graph whose maximum degree
$d$, for any inverse temperature $\beta$ below the uniqueness
threshold for the Ising model on the $d$-regular tree.

Similarly, the Gibbs sampler for the hardcore model is rapidly mixing
on any graph whose maximum degree is $d$ for any fugacity $\lam$ below
the uniqueness threshold for the hard-core model on the $d$-regular tree.
\end{conjecture}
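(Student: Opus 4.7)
The plan is to try to carry out the same three-step program that proved Theorem \ref{t:main}, namely: verify versions of $\Vol(R,\mathfrak{X})$, $\LM(R,T)$, and $\SM(R)$ for the antiferromagnetic Ising and hardcore models on balls $B(v,R)$ in a host graph of maximum degree $d$, and then feed them into the censoring contraction of Theorem \ref{t:general}. The spatial mixing ingredient is the one that generalizes most cleanly. Both models are two-spin systems, so Weitz's SAW tree construction of Lemma \ref{lem_Weitz} is available and reduces the marginal of $\sigma_v$ in the host graph to a marginal on the tree of self-avoiding walks rooted at $v$. Below the tree uniqueness threshold one has contraction of the standard ratio recursion on the $d$-regular tree with rate $\alpha(d,\beta)<1$ (resp.\ $\alpha(d,\lam)<1$), so I would try to mimic Lemma \ref{lem_tree_boundary_effect} by a direct recursion on the SAW tree to show that the single-site influence $a_u$ from the boundary sphere $S(v,R)$ decays like $\alpha^R$, and therefore $\sum_{u\in S(v,R)} a_u \leq \frac14$ once $R = R(d,\beta)$ is a large enough constant.

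The $\Vol$ and $\LM$ conditions should then go through with only quantitative changes: the volume bound is identical to Lemma \ref{lem:main_Vol}, and the cutwidth-based argument of Lemma \ref{l:cutwidthMix} adapts to any two-spin system with bounded interaction by replacing the bound on $P(x\to y)$ with the corresponding transition of the antiferromagnetic Ising or hardcore Gibbs sampler (the hardcore case requires a little care because the stationary weight of the all-$0$ configuration gives a natural reference state, but the canonical-paths framework is unchanged). So all pieces of the input to Theorem \ref{t:general} can be supplied.

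The decisive obstacle, and the reason the statement is a conjecture rather than a theorem, is that the proof of Theorem \ref{t:general} itself uses monotonicity in two essential places: in the reduction to coupling from the extremal $\pm$ configurations, and, more importantly, in the appeal to the Peres--Winkler censoring inequality which controls the censored dynamics $Z_t^{\pm}$. Neither survives for antiferromagnetic Ising or hardcore on a general graph. The first strategy I would try is to replace the monotone coupling by the grand coupling and track the disagreement set $D_t = \{u : X_t^1(u) \neq X_t^2(u)\}$ between two arbitrary initial configurations, then prove the analogue of the contraction \eqref{e:contraction} as a disagreement-percolation statement: after censoring all updates outside $B(v,R-1)$ and running for time $T\lceil \log 8\mathfrak{X}\rceil$, the probability that $v\in D_{S+t}$ is at most $\tfrac12 \max_u P(u\in D_S)$. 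The Weitz-tree spatial mixing estimate supplies the correct boundary-to-center decay; the missing step is a censoring-type inequality asserting that freezing the boundary does not decrease the coupling time.

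An intermediate target I would handle first is the bipartite case. On a bipartite host graph, flipping spins on one side of the bipartition turns antiferromagnetic Ising into ferromagnetic Ising and hardcore into a monotone two-spin system, so Theorem \ref{t:general} applies essentially verbatim. This case would be useful both as a sanity check and because for locally tree-like graphs (random $d$-regular, $G(n,d/n)$) each ball $B(v,R)$ is bipartite with high probability for $R=O((\log\log n)^2)$, so the bipartite-flip trick can be performed locally inside each ball. I expect the main difficulty to be extending beyond this local-bipartite regime to graphs containing many short odd cycles: there the natural coupling does not contract uniformly up to the tree uniqueness threshold, and a genuinely new idea, presumably a contraction in a metric tailored to the Weitz-tree recursion rather than to the pointwise partial order, will be required.
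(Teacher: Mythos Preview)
The paper does not prove this statement: it is stated explicitly as an open conjecture, and the paper itself remarks, just before posing it, that ``the use of the Censoring Lemma of Peres and Winkler does not allow to extend it to non-monotone systems such as random colourings.'' So there is no proof in the paper to compare against; your proposal is a research plan, and you correctly identify the essential obstruction --- the proof of Theorem~\ref{t:general} needs monotonicity both for the reduction to the extremal starting states and for the Peres--Winkler censoring step, and neither is available for antiferromagnetic Ising or hardcore on a general graph.

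Two of your suggested workarounds do not close the gap. First, the disagreement-percolation reformulation still requires precisely a censoring-type inequality (``freezing the boundary does not decrease the coupling time''), and no such statement is known for non-monotone systems; without it the contraction~\eqref{e:contraction} cannot be established, so this just restates the missing ingredient. Second, the local-bipartite-flip idea does not patch into the argument as written: the monotone coupling in Theorem~\ref{t:general} is a single global coupling from two global extremal states, and the censoring comparison $Z_t^{\pm}\succcurlyeq X_t^{\pm}$ is with respect to a single global partial order. Performing different spin-flips inside different balls $B(v,R)$ produces incompatible local orders, so there is no common pair of extremal starting configurations and no global stochastic domination to invoke. (Also, it is not true that every ball $B(v,R)$ is bipartite with high probability in $G(n,d/n)$: the paper only shows $t(v,R)\le 1$, and the single extra edge can close an odd cycle; indeed $G(n,d/n)$ contains $\Theta(1)$ short odd cycles in expectation.) Your global-bipartite observation is correct and yields the conjecture on bipartite graphs, but the general case genuinely requires a new idea, which is why the paper leaves it open.
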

We recall that for both of these models, the mixing time on almost all
random $d$-regular bipartite graphs is exponential in $n$ the size of
the graph beyond the uniqueness threshold \cite
{MoWeWo09,GerschenfeldMontanari07,DemboMontanari09}, so our
conjecture is that uniqueness on the tree exactly corresponds to rapid
mixing of the Gibbs sampler. A similar conjecture can be made with
respect to the coloring model.

\section*{Acknowledgments}

Most of this work was carried out while A. Sly was a student at UC Berkeley
and on visits to Rome Tre and the Weizmann Institute. A.~Sly would like to
thank Fabio Martinelli for useful discussions.



\printaddresses


\begin{thebibliography}{32}

\bibitem{AldousFillu}
\begin{bmisc}[auto:STB|2012/07/16|09:13:59]
\bauthor{\bsnm{Aldous},~\bfnm{D.}\binits{D.}} \AND
  \bauthor{\bsnm{Fill},~\bfnm{J.~A.}\binits{J.~A.}}
\bhowpublished{Reversible Markov chains and random walks on graphs. Unpublished
  manuscript. Available at
  \texttt{\href{http://stat-www.berkeley.edu/users/aldous/book.html}{http://stat-www.berkeley.edu/users/}
  \href{http://stat-www.berkeley.edu/users/aldous/book.html}{aldous/book.html}}.}
\bptok{imsref}%
\end{bmisc}
\endbibitem

\bibitem{BeKeMoPe05}
\begin{barticle}[auto:STB|2012/07/16|09:13:59]
\bauthor{\bsnm{Berger},~\bfnm{N.}\binits{N.}},
  \bauthor{\bsnm{Kenyon},~\bfnm{C.}\binits{C.}},
  \bauthor{\bsnm{Mossel},~\bfnm{E.}\binits{E.}} \AND
  \bauthor{\bsnm{Peres},~\bfnm{Y.}\binits{Y.}}
(\byear{2005}).
\btitle{Glauber dynamics on prob and hyperbolic graphs}.
\bjournal{Probab. Theory Related Fields}
\bvolume{131}
\bpages{311--340}.
\bptok{imsref}%
\end{barticle}
\endbibitem

\bibitem{Cesi01}
\begin{barticle}[mr]
\bauthor{\bsnm{Cesi},~\bfnm{Filippo}\binits{F.}}
(\byear{2001}).
\btitle{Quasi-factorization of the entropy and logarithmic {S}obolev
  inequalities for {G}ibbs random fields}.
\bjournal{Probab. Theory Related Fields}
\bvolume{120}
\bpages{569--584}.
\bid{doi={10.1007/PL00008792}, issn={0178-8051}, mr={1853483}}
\bptok{imsref}%
\end{barticle}
\endbibitem

\bibitem{DemboMontanari09}
\begin{barticle}[mr]
\bauthor{\bsnm{Dembo},~\bfnm{Amir}\binits{A.}} \AND
  \bauthor{\bsnm{Montanari},~\bfnm{Andrea}\binits{A.}}
(\byear{2010}).
\btitle{Ising models on locally tree-like graphs}.
\bjournal{Ann. Appl. Probab.}
\bvolume{20}
\bpages{565--592}.
\bid{doi={10.1214/09-AAP627}, issn={1050-5164}, mr={2650042}}
\bptok{imsref}%
\end{barticle}
\endbibitem

\bibitem{DobrushinShlosman85}
\begin{bincollection}[auto:STB|2012/07/16|09:13:59]
\bauthor{\bsnm{Dobrushin},~\bfnm{R.~L.}\binits{R.~L.}} \AND
  \bauthor{\bsnm{Shlosman},~\bfnm{S.~B.}\binits{S.~B.}}
(\byear{1985}).
\btitle{Constructive criterion for uniqueness of a Gibbs field}.
In \bbooktitle{Statistical Mechanics and Dynamical Systems, Volume 10}
(\beditor{\bfnm{J.}\binits{J.}~\bsnm{Fritz}},
  \beditor{\bfnm{A.}\binits{A.}~\bsnm{Jaffe}} \AND
  \beditor{\bfnm{D.}\binits{D.}~\bsnm{Szasz}}, eds.)
\bpages{347--370}.
\bptok{imsref}%
\end{bincollection}
\endbibitem

\bibitem{DSVW04}
\begin{barticle}[auto:STB|2012/07/16|09:13:59]
\bauthor{\bsnm{Dyer},~\bfnm{Martin}\binits{M.}},
  \bauthor{\bsnm{Sinclair},~\bfnm{Alistair}\binits{A.}},
  \bauthor{\bsnm{Vigoda},~\bfnm{Eric}\binits{E.}} \AND
  \bauthor{\bsnm{Weitz},~\bfnm{Dror}\binits{D.}}
(\byear{2004}).
\btitle{Mixing in time and space for lattice spin systems: A combinatorial
  view}.
\bjournal{Random Structures Algorithms}
\bvolume{24}
\bpages{461--479}.
\bptok{imsref}%
\end{barticle}
\endbibitem

\bibitem{GerschenfeldMontanari07}
\begin{bincollection}[auto:STB|2012/07/16|09:13:59]
\bauthor{\bsnm{Gershchenfeld},~\bfnm{A.}\binits{A.}} \AND
  \bauthor{\bsnm{Montanari},~\bfnm{A.}\binits{A.}}
(\byear{2007}).
\btitle{Reconstruction for models on random graphs}.
In \bbooktitle{Annual IEEE Symposium on Foundations of Computer Science}
\bpages{194--204}.
\bpublisher{IEEE Comput. Soc.}, \baddress{Los Alamitos, CA}.
\bptok{imsref}%
\end{bincollection}
\endbibitem

\bibitem{HayesSinclair05}
\begin{barticle}[mr]
\bauthor{\bsnm{Hayes},~\bfnm{Thomas~P.}\binits{T.~P.}} \AND
  \bauthor{\bsnm{Sinclair},~\bfnm{Alistair}\binits{A.}}
(\byear{2007}).
\btitle{A general lower bound for mixing of single-site dynamics on graphs}.
\bjournal{Ann. Appl. Probab.}
\bvolume{17}
\bpages{931--952}.
\bid{doi={10.1214/105051607000000104}, issn={1050-5164}, mr={2326236}}
\bptok{imsref}%
\end{barticle}
\endbibitem

\bibitem{HayesVigoda05}
\begin{barticle}[mr]
\bauthor{\bsnm{Hayes},~\bfnm{Thomas~P.}\binits{T.~P.}} \AND
  \bauthor{\bsnm{Vigoda},~\bfnm{Eric}\binits{E.}}
(\byear{2006}).
\btitle{Coupling with the stationary distribution and improved sampling for
  colorings and independent sets}.
\bjournal{Ann. Appl. Probab.}
\bvolume{16}
\bpages{1297--1318}.
\bid{doi={10.1214/105051606000000330}, issn={1050-5164}, mr={2260064}}
\bptok{imsref}%
\end{barticle}
\endbibitem

\bibitem{Higuchi93}
\begin{barticle}[auto:STB|2012/07/16|09:13:59]
\bauthor{\bsnm{Higuchi},~\bfnm{Y.}\binits{Y.}}
(\byear{1993}).
\btitle{Coexistence of infinite (*)-clusters II. Ising percolation in two
  dimensions}.
\bjournal{Probab. Theory Related Fields}
\bvolume{97}
\bpages{1--33}.
\bptok{imsref}%
\end{barticle}
\endbibitem

\bibitem{JerrumSinclair89}
\begin{barticle}[auto:STB|2012/07/16|09:13:59]
\bauthor{\bsnm{Jerrum},~\bfnm{M.}\binits{M.}} \AND
  \bauthor{\bsnm{Sinclair},~\bfnm{A.}\binits{A.}}
(\byear{1989}).
\btitle{Approximating the permanent}.
\bjournal{SIAM J. Comput.}
\bvolume{18}
\bpages{1149--1178}.
\bptok{imsref}%
\end{barticle}
\endbibitem

\bibitem{KeMoPe01}
\begin{bincollection}[auto:STB|2012/07/16|09:13:59]
\bauthor{\bsnm{Kenyon},~\bfnm{C.}\binits{C.}},
  \bauthor{\bsnm{Mossel},~\bfnm{E.}\binits{E.}} \AND
  \bauthor{\bsnm{Peres},~\bfnm{Y.}\binits{Y.}}
(\byear{2001}).
\btitle{Glauber dynamics on trees and hyperbolic graphs}.
In \bbooktitle{42nd IEEE Symposium on Foundations of Computer Science (Las
  Vegas, NV)}
\bpages{568--578}.
\bpublisher{IEEE Comput. Soc.}, \baddress{Los Alamitos, CA}.
\bptok{imsref}%
\end{bincollection}
\endbibitem

\bibitem{KMRSZ07}
\begin{barticle}[auto:STB|2012/07/16|09:13:59]
\bauthor{\bsnm{Kr{\c{z}}aka{\l}a},~\bfnm{F.}\binits{F.}},
  \bauthor{\bsnm{Montanari},~\bfnm{A.}\binits{A.}},
  \bauthor{\bsnm{Ricci-Tersenghi},~\bfnm{F.}\binits{F.}},
  \bauthor{\bsnm{Semerjian},~\bfnm{G.}\binits{G.}} \AND
  \bauthor{\bsnm{Zdeborov{\'a}},~\bfnm{L.}\binits{L.}}
(\byear{2007}).
\btitle{Gibbs states and the set of solutions of random constraint satisfaction
  problems}.
\bjournal{Proc. Natl. Acad. Sci. USA}
\bvolume{104}
\bpages{10318--10323}.
\bptok{imsref}%
\end{barticle}
\endbibitem

\bibitem{LevPerWil09}
\begin{bbook}[mr]
\bauthor{\bsnm{Levin},~\bfnm{David~A.}\binits{D.~A.}},
  \bauthor{\bsnm{Peres},~\bfnm{Yuval}\binits{Y.}} \AND
  \bauthor{\bsnm{Wilmer},~\bfnm{Elizabeth~L.}\binits{E.~L.}}
(\byear{2009}).
\btitle{Markov Chains and Mixing Times}.
\bpublisher{Amer. Math. Soc.}, \blocation{Providence, RI}.
\bnote{With a chapter by James G. Propp and David B. Wilson}.
\bid{mr={2466937}}
\bptok{imsref}%
\end{bbook}
\endbibitem

\bibitem{Liggett85}
\begin{bbook}[auto:STB|2012/07/16|09:13:59]
\bauthor{\bsnm{Liggett},~\bfnm{Thomas~M.}\binits{T.~M.}}
(\byear{1985}).
\btitle{Interacting Particle Systems}.
\bseries{Grundlehren der Mathematischen Wissenschaften [Fundamental Principles
  of Mathematical Sciences]}
\bvolume{276}.
\bpublisher{Springer}, \baddress{New York}.
\bptok{imsref}%
\end{bbook}
\endbibitem

\bibitem{Lyons89}
\begin{barticle}[auto:STB|2012/07/16|09:13:59]
\bauthor{\bsnm{Lyons},~\bfnm{R.}\binits{R.}}
(\byear{1989}).
\btitle{The Ising model and percolation on trees and tree-like graphs}.
\bjournal{Comm. Math. Phys.}
\bvolume{125}
\bpages{337--353}.
\bptok{imsref}%
\end{barticle}
\endbibitem

\bibitem{Martinelli99}
\begin{bincollection}[auto:STB|2012/07/16|09:13:59]
\bauthor{\bsnm{Martinelli},~\bfnm{F.}\binits{F.}}
(\byear{1999}).
\btitle{Lectures on Glauber dynamics for discrete spin models}.
In \bbooktitle{Lectures on Probability Theory and Statistics}.
\bseries{Lecture Notes in Math.}
\bvolume{1717}
\bpages{93--191}.
\bpublisher{Springer}, \baddress{Berlin}.
\bptok{imsref}%
\end{bincollection}
\endbibitem

\bibitem{MartinelliOliveri94a}
\begin{barticle}[auto:STB|2012/07/16|09:13:59]
\bauthor{\bsnm{Martinelli},~\bfnm{F.}\binits{F.}} \AND
  \bauthor{\bsnm{Olivieri},~\bfnm{E.}\binits{E.}}
(\byear{1994}).
\btitle{Approach to equilibrium of Glauber dynamics in the one phase region. I.
  The attractive case}.
\bjournal{Comm. Math. Phys.}
\bvolume{161}
\bpages{447--486}.
\bptok{imsref}%
\end{barticle}
\endbibitem

\bibitem{MartinelliOliveri94b}
\begin{barticle}[auto:STB|2012/07/16|09:13:59]
\bauthor{\bsnm{Martinelli},~\bfnm{F.}\binits{F.}} \AND
  \bauthor{\bsnm{Olivieri},~\bfnm{E.}\binits{E.}}
(\byear{1994}).
\btitle{Approach to equilibrium of Glauber dynamics in the one phase region.
  II. The general case}.
\bjournal{Comm. Math. Phys.}
\bvolume{161}
\bpages{487--514}.
\bptok{imsref}%
\end{barticle}
\endbibitem

\bibitem{MaSiWe03a}
\begin{bincollection}[auto:STB|2012/07/16|09:13:59]
\bauthor{\bsnm{Martinelli},~\bfnm{F.}\binits{F.}},
  \bauthor{\bsnm{Sinclair},~\bfnm{A.}\binits{A.}} \AND
  \bauthor{\bsnm{Weitz},~\bfnm{D.}\binits{D.}}
(\byear{2003}).
\btitle{The Ising model on trees: Boundary conditions and mixing time}.
In \bbooktitle{Proceedings of the Forty Fourth Annual Symposium on Foundations
  of Computer Science}
\bpages{628--639}.
\bptok{imsref}%
\end{bincollection}
\endbibitem

\bibitem{MaSiWe04}
\begin{barticle}[auto:STB|2012/07/16|09:13:59]
\bauthor{\bsnm{Martinelli},~\bfnm{F.}\binits{F.}},
  \bauthor{\bsnm{Sinclair},~\bfnm{Alistair~A.}\binits{A.~A.}} \AND
  \bauthor{\bsnm{Weitz},~\bfnm{D.}\binits{D.}}
(\byear{2004}).
\btitle{Glauber dynamics on trees: Boundary conditions and mixing time}.
\bjournal{Comm. Math. Phys.}
\bvolume{250}
\bpages{301--334}.
\bptok{imsref}%
\end{barticle}
\endbibitem

\bibitem{MezMon09}
\begin{bbook}[auto:STB|2012/07/16|09:13:59]
\bauthor{\bsnm{M{\'e}zard},~\bfnm{M.}\binits{M.}} \AND
  \bauthor{\bsnm{Montanari},~\bfnm{A.}\binits{A.}}
(\byear{2009}).
\btitle{Information, Physics, and Computation}.
\bpublisher{Oxford Univ. Press}, \baddress{Oxford}.
\bptok{imsref}%
\end{bbook}
\endbibitem

\bibitem{MoRiSe08}
\begin{barticle}[auto:STB|2012/07/16|09:13:59]
\bauthor{\bsnm{Montanari},~\bfnm{A.}\binits{A.}},
  \bauthor{\bsnm{Ricci-Tersenghi},~\bfnm{F.}\binits{F.}} \AND
  \bauthor{\bsnm{Semerjian},~\bfnm{G.}\binits{G.}}
(\byear{2008}).
\btitle{Clusters of solutions and replica symmetry breaking in random
  k-satisfiability}.
\bjournal{J. Stat. Mech. Theory Exp.}
\bvolume{2008}
\bpages{P04004}.
\bptok{imsref}%
\end{barticle}
\endbibitem

\bibitem{MosselSly08}
\begin{bincollection}[auto:STB|2012/07/16|09:13:59]
\bauthor{\bsnm{Mossel},~\bfnm{E.}\binits{E.}} \AND
  \bauthor{\bsnm{Sly},~\bfnm{A.}\binits{A.}}
(\byear{2008}).
\btitle{Rapid mixing of Gibbs sampling on graphs that are sparse on average}.
In \bbooktitle{Proceedings of the Nineteenth Annual ACM-SIAM Symposium on
  Discrete Algorithms (SODA)}
\bpages{238--247}.
\bptok{imsref}%
\end{bincollection}
\endbibitem

\bibitem{MosselSly09}
\begin{barticle}[auto:STB|2012/07/16|09:13:59]
\bauthor{\bsnm{Mossel},~\bfnm{E.}\binits{E.}} \AND
  \bauthor{\bsnm{Sly},~\bfnm{A.}\binits{A.}}
(\byear{2009}).
\btitle{Rapid mixing of Gibbs sampling on graphs that are sparse on average}.
\bjournal{Random Structures Algorithms}
\bvolume{35}
\bpages{250--270}.
\bptok{imsref}%
\end{barticle}
\endbibitem

\bibitem{MoWeWo09}
\begin{barticle}[auto:STB|2012/07/16|09:13:59]
\bauthor{\bsnm{Mossel},~\bfnm{E.}\binits{E.}},
  \bauthor{\bsnm{Weitz},~\bfnm{D.}\binits{D.}} \AND
  \bauthor{\bsnm{Wormald},~\bfnm{N.}\binits{N.}}
(\byear{2009}).
\btitle{On the hardness of sampling independent sets beyond the tree
  threshold}.
\bjournal{Probab. Theory Related Fields}
\bvolume{143}
\bpages{401--439}.
\bptok{imsref}%
\end{barticle}
\endbibitem

\bibitem{Peres05}
\begin{bmisc}[auto:STB|2012/07/16|09:13:59]
\bauthor{\bsnm{Peres},~\bfnm{Y.}\binits{Y.}}
\bhowpublished{Mixing for Markov chains and spin systems. Draft lecture notes}.
\bptok{imsref}%
\end{bmisc}
\endbibitem

\bibitem{StrookZegarlinski92}
\begin{barticle}[auto:STB|2012/07/16|09:13:59]
\bauthor{\bsnm{Stroock},~\bfnm{D.~W.}\binits{D.~W.}} \AND
  \bauthor{\bsnm{Zegarli{\'n}ski},~\bfnm{B.}\binits{B.}}
(\byear{1992}).
\btitle{The logarithmic Sobolev inequality for discrete spin systems on a
  lattice}.
\bjournal{Comm. Math. Phys.}
\bvolume{149}
\bpages{175--193}.
\bptok{imsref}%
\end{barticle}
\endbibitem

\bibitem{Vigoda99}
\begin{bincollection}[auto:STB|2012/07/16|09:13:59]
\bauthor{\bsnm{Vigoda},~\bfnm{E.}\binits{E.}}
(\byear{1999}).
\btitle{Improved bounds for sampling coloring}.
In \bbooktitle{40th Annual Symposium on Foundations of Computer Science (FOCS)}
\bpages{51--59}.
\bptok{imsref}%
\end{bincollection}
\endbibitem

\bibitem{Vigoda00}
\begin{barticle}[auto:STB|2012/07/16|09:13:59]
\bauthor{\bsnm{Vigoda},~\bfnm{E.}\binits{E.}}
(\byear{2000}).
\btitle{Improved bounds for sampling coloring}.
\bjournal{J. Math. Phys.}
\bvolume{3}
\bpages{1555--1569}.
\bptok{imsref}%
\end{barticle}
\endbibitem

\bibitem{Weitz05}
\begin{barticle}[auto:STB|2012/07/16|09:13:59]
\bauthor{\bsnm{Weitz},~\bfnm{Dror}\binits{D.}}
(\byear{2005}).
\btitle{Combinatorial criteria for uniqueness of Gibbs measures}.
\bjournal{Random Structures Algorithms}
\bvolume{27}
\bpages{445--475}.
\bptok{imsref}%
\end{barticle}
\endbibitem

\bibitem{Weitz06}
\begin{bmisc}[auto:STB|2012/07/16|09:13:59]
\bauthor{\bsnm{Weitz},~\bfnm{D.}\binits{D.}}
(\byear{2006}).
\bhowpublished{Counting indpendent sets up to the tree threshold. In
  \textit{Proceedings of the Thirty-eighth Annual ACM Symposium on Theory of
  Computing} 140--149. ACM, New York.}
\bptok{imsref}%
\end{bmisc}
\endbibitem

\bibitem{ZhaLiaBai11}
\begin{barticle}[mr]
\bauthor{\bsnm{Zhang},~\bfnm{Jinshan}\binits{J.}},
  \bauthor{\bsnm{Liang},~\bfnm{Heng}\binits{H.}} \AND
  \bauthor{\bsnm{Bai},~\bfnm{Fengshan}\binits{F.}}
(\byear{2011}).
\btitle{Approximating partition functions of the two-state spin system}.
\bjournal{Inform. Process. Lett.}
\bvolume{111}
\bpages{702--710}.
\bid{doi={10.1016/j.ipl.2011.04.012}, issn={0020-0190}, mr={2840539}}
\bptok{imsref}%
\end{barticle}
\endbibitem


\end{thebibliography}
\end{document}